\newcommand{\Z}{\mathbb{Z}}
\newcommand{\Q}{\mathbb{Q}}
\newcommand{\C}{\mathbb{C}}
\newcommand{\dsum}{S_{\chi_1, \chi_2}}
\newcommand{\oo}[1]{\overline{#1}}
\newcommand{\abd}{\begin{bmatrix}
        a & b \\
        0 & d
     \end{bmatrix}}
\newcommand{\abdp}{\begin{bmatrix}
        a' & b' \\
        0 & d'
     \end{bmatrix}}
\newcommand{\mat}{\begin{bmatrix}
        a & b \\
        c & d
     \end{bmatrix}}   
\newcommand{\hkl}{\begin{bmatrix}
        h & * \\
        k & l
     \end{bmatrix}}
\newcommand{\projsum}{\frac{1}{\phi(N)} \sum_{j \in (\Z/N\Z)^{\cross}}}
\newcommand{\Mod}[1]{\ (\mathrm{mod}\ #1)}
\newcommand{\Addresses}{{
  \bigskip
  \footnotesize

  M.~Majure, \textsc{Department of Mathematics, University of Iowa,
    Iowa City, Iowa 52240}\par\nopagebreak
  \textit{E-mail address}, M.~Majure: \texttt{majuremitch@gmail.com}
}}
\DeclareMathOperator{\sgn}{sgn}
\newtheorem{theorem}{Theorem}[section]
\newtheorem{conjecture}[theorem]{Conjecture}
\newtheorem{corollary}[theorem]{Corollary}
\newtheorem{definition}[theorem]{Definition}
\newtheorem{lemma}[theorem]{Lemma}
\newtheorem{proposition}[theorem]{Proposition}
\newtheorem*{remark}{Remark}
\theoremstyle{definition}
\title{Algebraic properties of the values of newform Dedekind sums}
\author{Mitchell Majure}
\date{July 2022}
\begin{document}

\maketitle

\begin{abstract}
     We study the image of a generalized Dedekind sum relating to the weight zero Eisenstein series $E_{\chi_1,\chi_2}$. We show that the image is a lattice of full rank inside a number field determined by the characters $\chi_1$ and $\chi_2$. We also give a generalization of Knopp's identity for the classical Dedekind sum.
\end{abstract}


\section{Introduction}
For coprime integers $h, k$ where $k>0$, the classical Dedekind sum is defined as
$$s(h,k) = \sum_{j \Mod{k}}B_1\bigg(\frac{j}{k}\bigg)B_1\bigg(\frac{hj}{k}\bigg),$$\\
where $B_1(x)$ is the first Bernoulli function (also known as the sawtooth function):
\begin{equation*}
    B_1(x) = 
    \begin{cases}
        0, & \text{if } x\in\Z\\
        x-\lfloor x \rfloor-\frac{1}{2} & \text{otherwise.}
    \end{cases}
\end{equation*}
The classical Dedekind sum was first introduced to study the automorphy factor for the transformation of the Dedekind $\eta$ function. It has also appeared outside of number theory, where a particularly fascinating example is in the enumeration of lattice points in tetrahedra. One can find a very thorough discussion of the Dedekind sum's properties in \cite{sums}.\\

Many papers have investigated the values taken by the classical Dedekind sum. In \cite{Hickerson}, it is shown that the values are dense in $\mathbb{R}.$ There is a standing conjecture of Girstmair which would completely determine the values of the normalized Dedekind sum $12s(h,k)$. Specifically,
\begin{conjecture}[\cite{Girst}]
For a natural number $q \geq 2$, and $k \in \Z$ coprime to $q$, $\frac{k}{q}$ is a value of the normalized Dedekind sum if, and only if, the following hold:
\begin{enumerate}
    \item If $3 \nmid q$, then $k \equiv 0 \pmod{3}$.
    \item If $2 \nmid q$, then
    $k \equiv \begin{cases}
        2 \pmod{4} & \text{if } q \equiv 3 \pmod{4};\\
        0 \pmod{8} & \text{if } q \text{ is a square;}\\
        0 \pmod{4} & \text{otherwise.}
    \end{cases}$
\end{enumerate}
\end{conjecture}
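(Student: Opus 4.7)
The plan is to split the biconditional into necessity and sufficiency and attack each half with different tools.

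For necessity — that if $12 s(h, k') = k/q$ with $\gcd(k, q) = 1$, then $k$ satisfies the listed congruences — I would analyze the defining sum $\sum_j B_1(j/k') B_1(hj/k')$ modulo $3$ and modulo powers of $2$ in turn. The mod $3$ case should reduce to the observation that pairing $j \leftrightarrow k'-j$ and using $B_1(-x) = -B_1(x)$ for $x \notin \Z$ forces the denominator of $s(h, k')$ to be coprime to $3$ whenever $3 \nmid k'$, so multiplication by $12$ gives $k \equiv 0 \pmod 3$. For the mod-$2^r$ conditions I would case-split on the parity of $k'$, on $h \pmod{k'}$, and on quadratic character data of $h$ modulo $k'$; the three subcases of the conjecture should fall out of the $2$-adic valuation computation, with the square-$q$ subcase detecting an extra vanishing attributable to the quadratic structure. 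These directions recover known congruences for $12 s(h, k')$ in a unified way.

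For sufficiency I would proceed constructively via the continued fraction algorithm. The identity
\[
12 s(h, k') = \sum_{i=1}^{n} (-1)^{i-1} a_i + \frac{h + h^{*}}{k'} - 3 \sgn(k'),
\]
where $h/k' = [a_0; a_1, \ldots, a_n]$ and $h h^{*} \equiv 1 \pmod{k'}$, lets one prescribe partial quotients so as to target a desired value. My idea is an inductive construction: begin from a seed pair $(h_0, k_0')$ whose Dedekind sum already lies in the correct congruence class, then append controlled continued fraction steps that move the value toward $k/q$ without leaving the class, exploiting the telescoping behavior of the alternating sum of $a_i$'s.

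The main obstacle is the sufficiency direction, and within it the square-denominator subcase. When $q$ is a perfect square, the conjecture demands $k \equiv 0 \pmod 8$ rather than merely $\pmod 4$, reflecting a second $2$-adic obstruction beyond what the local analysis sees. Realizing every admissible $k/q$ requires steering the continued fraction construction to land on square denominators while preserving the mod-$8$ constraint — a global arithmetic condition that resists purely local arguments. I would try to handle this by iterating the reciprocity law
\[
12 s(h, k') + 12 s(k', h) = -3 + \frac{h}{k'} + \frac{k'}{h} + \frac{1}{h k'}
\]
to transfer denominator information between $h$ and $k'$ and track the squarefree part of $k'$ through each step. A secondary technical issue is preventing the constructed fraction from reducing to lower terms, which would collapse it onto a different denominator $q$ and invalidate the target.
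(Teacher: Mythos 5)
This statement is Girstmair's conjecture, and the paper does not prove it --- it is quoted verbatim as a \emph{conjecture} (an open problem) to motivate the study of the values of Dedekind sums, and the paper's own results concern a different object (the newform sums on $\Gamma_1(q_1q_2)$), not the classical sum on $\SL$. So there is no paper proof to compare against, and any complete argument you produced would be a new research result rather than a reconstruction.

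Your proposal does not close the gap. The necessity direction is essentially the known part: congruence and denominator constraints on $12s(h,k)$ of the type you describe (via the $3$-part and $2$-part of the denominator, and the reciprocity law) are established in the literature, and your sketch is plausible there, though the case analysis for the square-$q$ subcase modulo $8$ is asserted rather than carried out. The fatal gap is sufficiency. The Barkan--Hickerson continued fraction formula
\[
12 s(h,k') = \sum_{i=1}^{n} (-1)^{i-1} a_i + \frac{h+h^{*}}{k'} - 3
\]
is the right tool for producing \emph{many} values (it is how density in $\mathbb{R}$ is proved), but it gives no control over the exact reduced denominator of the resulting rational number: the term $(h+h^{*})/k'$ couples the partial quotients to the modular inverse $h^{*}$ in a way that your ``inductive construction preserving the congruence class'' does not address. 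You acknowledge this obstacle yourself, and the proposed remedy (iterating reciprocity and ``tracking the squarefree part'') is a heuristic, not an argument. Since hitting every admissible $k/q$ exactly is precisely what makes the conjecture open, the proposal as written is a research plan, not a proof, and should not be presented as resolving the statement.
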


There are several generalizations of this sum in the literature. In particular we study the Dedekind sum associated to the Eisenstein series with two primitive Dirichlet characters discussed in \cite{SVY}, \cite{DG}, \cite{NRY}, and \cite{LBY}. Both \cite{NRY} and \cite{LBY} study the kernel of these Dedekind sums, which has apparently avoided simple characterization. A complementary aspect not covered in these papers is the image of the newform sums. In this paper we are able to determine the structure of the image of the newform Dedekind sum and the number field in which it lies. In addition, we generalize an identity of the classical Dedekind sum and give a brief discussion on the cohomological aspect of the newform sums. \\

We introduce the newform Dedekind sum by its finite sum definition.
\begin{definition} [\cite{SVY}]
\label{def:finite}
    Let $\chi_1,\chi_2$ be primitive Dirichlet characters modulo $q_1$ and $q_2$ (respectively) such that $\chi_1\chi_2(-1) = 1$ and $q_1,q_2 > 1$. Let $\gamma = \hkl \in \Gamma_0(q_1q_2)$ such that $k \geq 1$. Then
    $$\dsum(\gamma) = \dsum(h,k)=\sum_{j \Mod{k}}\sum_{n \Mod{q_1}}\oo{\chi_2}(j)\oo{\chi_1}(n)B_1\bigg(\frac{j}{k}\bigg)B_1\bigg(\frac{n}{q_1}+\frac{hj}{k}\bigg).$$
\end{definition}
\begin{definition} 
Denote by $F_{\chi_1,\chi_2}$, the smallest number field in which $\chi_1$ and $\chi_2$ take values.
\end{definition}
The newform Dedekind sums exhibit a wealth of properties. One that is both basic and highly important is the crossed homomorphism property. 

\begin{proposition}[Crossed Homomorphism Property, \cite{SVY}]
\label{crossed}
    For $\gamma_1,\gamma_2\in\Gamma_0(q_1q_2)$
    $$\dsum(\gamma_1\gamma_2) = \dsum(\gamma_1)+\psi(\gamma_1)\dsum(\gamma_2).$$
    We call $\psi = \chi_1\oo{\chi_2}$ the central character of $\dsum$.
\end{proposition}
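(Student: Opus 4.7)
The plan is to interpret $\dsum(\gamma)$ as a period of a weight-two Eisenstein series attached to the two Dirichlet characters. Writing $\widetilde{E}_{\chi_1,\chi_2}$ for this series (the weight-two companion of the $E_{\chi_1,\chi_2}$ named in the introduction), the content of the proof reduces to establishing the identification
\[
\dsum(\gamma) \;=\; c \int_{i\infty}^{\gamma(i\infty)} \widetilde{E}_{\chi_1,\chi_2}(\tau)\, d\tau
\]
for an appropriate normalizing constant $c = c_{\chi_1,\chi_2}$, after which the crossed-homomorphism property is essentially automatic.

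Granting this period formula, I would first verify that $\widetilde{E}_{\chi_1,\chi_2}$ transforms under $\Gamma_0(q_1 q_2)$ with nebentypus $\psi = \chi_1\oo{\chi_2}$, so that $\widetilde{E}_{\chi_1,\chi_2}(\gamma\tau)\,d(\gamma\tau) = \psi(\gamma)\,\widetilde{E}_{\chi_1,\chi_2}(\tau)\,d\tau$. Then I would split the path of integration at the intermediate cusp $\gamma_1(i\infty)$:
\[
\int_{i\infty}^{\gamma_1\gamma_2(i\infty)} \widetilde{E}\, d\tau \;=\; \int_{i\infty}^{\gamma_1(i\infty)} \widetilde{E}\, d\tau \;+\; \int_{\gamma_1(i\infty)}^{\gamma_1\gamma_2(i\infty)} \widetilde{E}\, d\tau,
\]
and in the second integral substitute $\tau\mapsto \gamma_1\tau$. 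The automorphy pulls out the scalar $\psi(\gamma_1)$ and sends the limits of integration to $i\infty$ and $\gamma_2(i\infty)$, so the two pieces add to $\dsum(\gamma_1) + \psi(\gamma_1)\dsum(\gamma_2)$, which is the claim.

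The main obstacle will be establishing the period representation itself. This requires expanding $\widetilde{E}_{\chi_1,\chi_2}$ in its Fourier series — whose coefficients are assembled from generalized Bernoulli numbers and a Gauss sum — integrating term by term under a Hecke-style regularization at the cusps, and matching the resulting double sum with the $B_1$-based definition. Coordinating the periodized Bernoulli functions with the explicit Eisenstein coefficients is the technical crux. A purely combinatorial alternative would skip the analytic machinery entirely: substitute the bottom row of the matrix product $\gamma_1\gamma_2$ into Definition~\ref{def:finite}, reindex the inner $j$-sum so that it factors along $\gamma_1$ and $\gamma_2$, and use periodicity of $B_1$ together with character orthogonality to collapse to the right-hand side. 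This route is elementary but gives no conceptual reason for the appearance of the specific twist $\chi_1\oo{\chi_2}$, which the modular interpretation makes manifest.
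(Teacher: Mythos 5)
Your central mechanism --- automorphy of a weight-two Eisenstein series plus splitting the path of integration at the intermediate cusp --- is the right idea and is formally equivalent to what actually makes the proposition true. But as structured, your proof hangs on the period identification
\[
\dsum(\gamma) = c\int_{i\infty}^{\gamma(i\infty)}\widetilde{E}_{\chi_1,\chi_2}(\tau)\,d\tau,
\]
which you correctly identify as the technical crux and then do not establish. Matching that period against the $B_1$-double-sum of Definition~\ref{def:finite} is essentially the content of the theorem in [SVY] that derives the finite-sum formula in the first place; it is much harder than the proposition you are asked to prove, and leaving it as the load-bearing step means the proof is not complete. There is also an avoidable analytic wrinkle in your setup: with basepoint $i\infty$ the integral terminates at the cusp $\gamma(i\infty)$, where the weight-two Eisenstein series need not have vanishing constant term, forcing the regularization you allude to. Taking any interior basepoint $z_0$ in the upper half-plane makes the path-splitting argument go through with no regularization at all.

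The paper itself does not reprove the proposition (it cites [SVY]), but its Definition~\ref{def:original} already hands you a two-line argument that bypasses periods entirely: writing $\dsum(\gamma)=\frac{\tau(\oo{\chi_1})}{\pi i}\bigl(f_{\chi_1,\chi_2}(\gamma z)-\psi(\gamma)f_{\chi_1,\chi_2}(z)\bigr)$ and using that this expression is independent of $z$, one computes
\begin{align*}
\dsum(\gamma_1\gamma_2)
&= \tfrac{\tau(\oo{\chi_1})}{\pi i}\Bigl[\bigl(f(\gamma_1(\gamma_2 z))-\psi(\gamma_1)f(\gamma_2 z)\bigr)+\psi(\gamma_1)\bigl(f(\gamma_2 z)-\psi(\gamma_2)f(z)\bigr)\Bigr]\\
&= \dsum(\gamma_1)+\psi(\gamma_1)\dsum(\gamma_2),
\end{align*}
where the first bracket is $\dsum(\gamma_1)$ evaluated at the point $\gamma_2 z$ and the $z$-independence (the ``brief argument'' the paper attributes to [SVY]) lets you replace it by $\dsum(\gamma_1)$. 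This is your path-splitting argument in disguise ($f_{\chi_1,\chi_2}$ is the Eichler integral of your $\widetilde{E}_{\chi_1,\chi_2}$), but it requires only the multiplicativity $\psi(\gamma_1\gamma_2)=\psi(\gamma_1)\psi(\gamma_2)$ and no Fourier-coefficient matching. Your proposed combinatorial alternative from Definition~\ref{def:finite} alone is also harder than you suggest: the finite sum is expressed in terms of the bottom row of $\gamma$ only, and reconstructing the cocycle relation purely from reindexing the $j$-sum is a nontrivial reciprocity-type computation, not a direct collapse via orthogonality.
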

\begin{remark}
This is to say $\dsum$ is an element of the space $H^1(\Gamma_0(N),\C^{\psi})$ (see Proposition \ref{decomp} and Section \ref{sec:discussion} for discussion). When $\psi=\boldsymbol{1},\,\dsum\in\text{Hom}(\Gamma_0(q_1q_2),\C).$ By restriction, we always have  $\dsum\in\text{Hom}(\Gamma_1(q_1q_2),\C).$
\end{remark}
Our primary result is a description of the structure of $\dsum(\Gamma_1(q_1q_2))$.
\begin{theorem}
\label{thm:structure}
    The image $\dsum(\Gamma_1(q_1q_2))$ is a lattice (of full rank) inside $F_{\chi_1,\chi_2}$.
\end{theorem}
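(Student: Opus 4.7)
I would break the claim into four assertions: (i) the image is contained in $F_{\chi_1,\chi_2}$; (ii) the restriction of $S_{\chi_1,\chi_2}$ to $\Gamma_1(q_1 q_2)$ is a genuine homomorphism, so the image is a subgroup of $(F_{\chi_1,\chi_2},+)$; (iii) that subgroup is free abelian of finite rank; (iv) its rank equals $[F_{\chi_1,\chi_2}:\mathbb{Q}]$.

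The first three are essentially bookkeeping. For (i), every term in Definition \ref{def:finite} is a product of the rational number $B_1(\cdot)B_1(\cdot)$ with character values $\overline{\chi_1}(n),\overline{\chi_2}(j) \in F_{\chi_1,\chi_2}$, so the whole sum lives in $F_{\chi_1,\chi_2}$. For (ii), any $\gamma = \hkl \in \Gamma_1(q_1 q_2)$ has $l \equiv 1 \pmod{q_1 q_2}$, hence $\psi(\gamma) = \chi_1(l)\overline{\chi_2}(l) = 1$, and Proposition \ref{crossed} collapses to ordinary additivity. For (iii), $\Gamma_1(q_1 q_2)$ is finitely generated (as a congruence subgroup), so its image under a homomorphism into the torsion-free group $(F_{\chi_1,\chi_2},+)$ is a free abelian group of rank at most $[F_{\chi_1,\chi_2}:\mathbb{Q}]$.

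The substance is (iv), and this is the step I expect to be the main obstacle. My strategy is a contradiction argument via the Galois action. Suppose the $\mathbb{Q}$-span $V$ of the image were a proper subspace of $F_{\chi_1,\chi_2}$. Non-degeneracy of the trace pairing then produces a nonzero $\beta \in F_{\chi_1,\chi_2}$ with $\mathrm{Tr}_{F_{\chi_1,\chi_2}/\mathbb{Q}}(\beta x) = 0$ for every $x \in V$. Expanding the trace as a Galois sum and using the equivariance
\[
\sigma\bigl(S_{\chi_1,\chi_2}(\gamma)\bigr) = S_{\sigma\chi_1,\sigma\chi_2}(\gamma), \qquad \sigma \in \mathrm{Gal}(F_{\chi_1,\chi_2}/\mathbb{Q}),
\]
which is immediate from Definition \ref{def:finite} once one observes that $\sigma$ commutes with complex conjugation on roots of unity, the vanishing of the functional on the image rewrites as
\[
\sum_{\sigma} \sigma(\beta)\, S_{\sigma\chi_1,\sigma\chi_2}(\gamma) \;=\; 0 \qquad \text{for every } \gamma \in \Gamma_1(q_1 q_2).
\]

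To finish, I must rule out such a nontrivial relation, i.e.\ establish $\mathbb{C}$-linear independence of the Galois conjugates $\{S_{\sigma\chi_1,\sigma\chi_2}\}_\sigma$ inside $\mathrm{Hom}(\Gamma_1(q_1 q_2),\mathbb{C})$. My preferred route is via the Eichler--Shimura realization: each $S_{\sigma\chi_1,\sigma\chi_2}$ represents a Hecke eigenclass attached to the Eisenstein series $E_{\sigma\chi_1,\sigma\chi_2}$, and distinct pairs in the Galois orbit produce distinct Hecke eigensystems (their Fourier coefficients already differ), so the cohomology classes lie in pairwise distinct Hecke eigenspaces and no nontrivial linear relation is possible. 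This forces every $\sigma(\beta)=0$, hence $\beta=0$, contradicting the choice of $\beta$. If the cohomological machinery is to be avoided, the fallback is direct computation: evaluate $S_{\chi_1,\chi_2}$ explicitly on a well-chosen family of unipotent elements with lower-left entry a multiple of $q_1 q_2$, and then show by a Gauss-sum or Vandermonde argument that the resulting values already span $F_{\chi_1,\chi_2}$ over $\mathbb{Q}$.
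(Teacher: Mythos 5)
Your proposal is correct and follows essentially the same route as the paper: both reduce the full-rank claim to the $\C$-linear independence of the Galois-conjugate sums $\{S_{\chi_1^\sigma,\chi_2^\sigma}\}$ (you via non-degeneracy of the trace pairing, the paper via a rank count on the matrix $(\alpha_i^{\sigma_j})$, which are dual formulations), and both obtain that independence by separating Hecke eigensystems. The one assertion you leave unproved --- that distinct character pairs yield distinct Eisenstein Fourier coefficients $\rho_{\chi_1,\chi_2}(n)$ --- is precisely the paper's Lemma \ref{lem:25}, proved there by a short norm-divisibility argument at primes together with Dirichlet's theorem; the paper otherwise deliberately avoids Eichler--Shimura, packaging the Hecke action instead as the generalized Knopp identity (Theorem \ref{thm:newknopp}) and the decomposition of Proposition \ref{decomp}.
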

The reader may wonder how we can describe the image of the newform Dedekind sum, while the classical case still remains open. 
This is a consequence of the differences between $SL_2(\Z)$ and the congruence subgroup $\Gamma_1(N)$. The former has no non-trivial homomorphisms into $\C$, while the latter does with the newform Dedekind sums. We also note this deviates from the case of modular forms, where more complication arises when restricting focus to congruence subgroups.

The newform sum also satisfies a generalization of an identity due to Knopp \cite{Knopp}:
    \begin{proposition}
    \label{prp:knopp}
        For $h,k,n \in \Z,\,k,n>0,$
        $$\sum_{ad=n}\,\sum_{b \Mod{d}}s(ah+bk,dk) = \sigma(n)s(h,k), \:\: \, \sigma(n) = \sum_{d|n}d.$$
    \end{proposition}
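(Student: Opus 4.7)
The plan is to prove Knopp's identity by direct manipulation of the finite-sum definition of $s(h,k)$, using the distribution relation for the first Bernoulli function,
\[
\sum_{b=0}^{d-1} B_1\!\left(x + \tfrac{b}{d}\right) = B_1(dx),
\]
together with one application of Möbius inversion on a coprimality condition.

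First I would evaluate the innermost sum over $b \Mod d$ in
\[
\text{LHS} = \sum_{ad = n}\sum_{b \Mod d}\sum_{j \Mod{dk}} B_1\!\left(\tfrac{j}{dk}\right) B_1\!\left(\tfrac{ahj}{dk} + \tfrac{bj}{d}\right).
\]
Writing $g = \gcd(j, d)$, $j = g j_0$, and $d = g d'$ with $\gcd(j_0, d') = 1$, the step $bj/d = bj_0/d'$ cycles $g$ times through every residue class $\Mod{d'}$ as $b$ runs $\Mod{d}$; the distribution relation then gives $\sum_b B_1(\cdots) = g\cdot B_1(ahj_0/k)$. Reindexing triples $(a, d, g)$ with $ad = n$, $g \mid d$ as triples $(a, g, d')$ with $agd' = n$, the LHS becomes
\[
\sum_{agd' = n} g \sum_{\substack{j_0 \Mod{d'k} \\ \gcd(j_0, d') = 1}} B_1\!\left(\tfrac{j_0}{d'k}\right) B_1\!\left(\tfrac{a h j_0}{k}\right).
\]

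Next I would remove the coprimality condition by Möbius inversion, $\mathbf{1}_{\gcd(j_0, d') = 1} = \sum_{e \mid d',\, e \mid j_0} \mu(e)$, and substitute $j_0 = e\tilde{\jmath}$ with $\tilde{\jmath} \Mod{(d'/e)k}$. Splitting $\tilde{\jmath} = \alpha + \beta k$ with $\alpha \Mod k$ and $\beta \Mod{d'/e}$, and applying the distribution relation a second time to the $\beta$-sum, the inner sum collapses to the classical Dedekind sum $s(eah, k)$. The LHS thus equals $\sum_{agd' = n}\, g \sum_{e \mid d'} \mu(e)\, s(eah, k)$.

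The final step is a divisor-sum reindexing: setting $f = ea$ and $d'' = d'/e$, the quadruple $(a, g, d', e)$ with $agd' = n,\, e \mid d'$ bijects with $(f, g, d'', e)$ satisfying $fgd'' = n,\, e \mid f$. Since $\sum_{e \mid f}\mu(e) = [f = 1]$, only $f = 1$ contributes, leaving $\sum_{gd'' = n} g \cdot s(h,k) = \sigma(n) s(h,k)$. The main obstacle is the careful $\gcd$-dependent bookkeeping in the first distribution step and the correct matching of parameters in the two reindexings; an alternative route via the transformation law of $\log\eta$ under the Hecke correspondence should also be available, but requires care with logarithmic branches.
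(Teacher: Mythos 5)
Your argument is correct, but it takes a genuinely different route from the one this paper uses. The paper does not reprove Knopp's identity from the finite-sum definition: it quotes the result, attributes the original proof to the action of the Hecke operator $T_n$ on $\log\eta$, and then proves the generalization (Theorem \ref{thm:newknopp}) by precisely that automorphic method --- applying $T_n^{\psi}$ to Definition \ref{def:original} at $z=\infty$, using that $E^*_{\chi_1,\chi_2}(z,s)$ is a Hecke eigenfunction with eigenvalue $\lambda_{\chi_1,\chi_2}(n,s)$, and reading off the multiplier $\sqrt{n}\,\lambda_{\chi_1,\chi_2}(n,1)=\rho_{\chi_1,\chi_2}(n)$, which specializes to $\sigma(n)$ in the classical case. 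Your computation instead runs entirely at the level of $B_1$: the $\gcd(j,d)$ stratification, the two applications of the distribution relation $\sum_{b\Mod{d}}B_1(x+b/d)=B_1(dx)$ (which is valid even at the jump points because $B_1$ is the balanced sawtooth, so no case analysis is needed in the first step), the M\"obius sieve on $\gcd(j_0,d')=1$, and the final reindexing $(a,g,d',e)\mapsto(ae,g,d'/e,e)$ with $\sum_{e\mid f}\mu(e)=[f=1]$ all check out and correctly isolate $\sigma(n)s(h,k)$. This is essentially the elementary proof the paper points to as \cite{elem}. What your route buys is self-containedness --- no Eisenstein series, no eigenfunction input, and it makes transparent that the identity is a purely arithmetic fact about $B_1$; what the Hecke-operator route buys is that it generalizes verbatim to the newform sums with the character twist $\psi(a)$, identifies the multiplier as a Hecke eigenvalue, and feeds directly into the linear-independence argument of Proposition \ref{independence}, which is why the paper organizes the material that way.
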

Knopp proves this by the action of the Hecke operator $T_n$ on $\text{log}(\eta)$. Elementary proofs exist, such as \cite{elem}, using only the arithmetic properties of the classical Dedekind sum and $B_1(x)$. The newform identity incroporates a twist by the central character of $\dsum$.
    \begin{theorem}[Generalized Knopp Identity]
    \label{thm:newknopp}
    For $h,k,n \in \Z,\, q_1q_2|k$, and $n,k>0$
        \begin{equation*}
        \sum_{\substack{ad=n\\(a,n)=1}}\psi(a)\sum_{b \Mod{d}}\dsum(ah+bk,dk) = \rho_{\chi_1,\chi_2}(n)\dsum(h,k), \end{equation*} 
        where 
        \begin{equation*}
         \rho_{\chi_1,\chi_2}(n) = \sum_{d|n}\chi_1\bigg(\frac{n}{d}\bigg)\overline{\chi_2}(d)d.
        \end{equation*}
    \end{theorem}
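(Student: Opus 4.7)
The plan is to prove this by expanding the finite-sum definition and collapsing the inner sums, following the elementary proof of the classical Knopp identity (cf.\ \cite{elem}). A Hecke-operator approach via the Eichler integral of $E_{\chi_1,\chi_2}$ (an eigenform with eigenvalue $\rho_{\chi_1,\chi_2}(n)$) is an equally natural alternative, but I focus on the elementary route as it handles the character twists most transparently.

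First, I would substitute Definition~\ref{def:finite} into the left-hand side and evaluate the inner sum over $b \Mod{d}$ using the distribution relation
\[
\sum_{b=0}^{d-1}B_1\!\left(y+\tfrac{bj}{d}\right) \;=\; \gcd(j,d)\,B_1\!\bigl(\tfrac{d}{\gcd(j,d)}\,y\bigr),
\]
applied with $y=m/q_1+ahj/(dk)$. Splitting the $j$-sum by $e=\gcd(j,d)$ and setting $j=ej_0$, $d'=d/e$, an inner character sum $\sum_{m \Mod{q_1}}\oo{\chi_1}(m)B_1(d'm/q_1+ahj_0/k)$ emerges. By the primitivity of $\chi_1$, this vanishes unless $\gcd(d',q_1)=1$, in which case $m\mapsto d'^{-1}m$ pulls out a factor of $\chi_1(d')$; symmetrically, the remaining $\oo{\chi_2}(e)=\oo{\chi_2}(d/d')$ weight forces $\gcd(d/d',q_2)=1$. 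Using $q_1q_2\mid k$, I would then decompose $j_0=r+kt$ with $r\Mod{k}$, $t\Mod{d'}$: the characters and the $B_1(m/q_1+ahj_0/k)$ factor do not see $t$, so the $t$-sum collapses $B_1(j_0/(d'k))$ down to $B_1(r/k)$ via a second application of the distribution relation, after Möbius inversion to relax the residual coprimality $\gcd(j_0,d')=1$. The remaining $r$- and $m$-sums reassemble into $\dsum(ah,k)$, which the crossed homomorphism (Proposition~\ref{crossed}) converts to $\psi(a)\dsum(h,k)$, cancelling the outer $\psi(a)$.

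Re-indexing the surviving divisor triples $(a,d,d')$ by a single divisor of $n$ and collecting the weights $\tfrac{d}{d'}\chi_1(d')\oo{\chi_2}(d/d')$, the prefactor sums to $\rho_{\chi_1,\chi_2}(n)$. The main obstacle is the Möbius/coprimality bookkeeping in the middle step: the constraint $\gcd(j_0,d')=1$ interacts non-trivially with $\gcd(k,d')$, and verifying that $q_1q_2\mid k$ produces sufficient cancellation to identify the residual sum with $\dsum(h,k)$ requires careful tracking of character supports at each reduction. A subtler point is converting $\dsum(ah,k)$ back to $\dsum(h,k)$ when $\gcd(a,k)>1$: the crossed homomorphism only factors matrices inside $\Gamma_0(q_1q_2)$, so an auxiliary direct manipulation (or the choice of a different coset representative) may be needed to legitimately produce the $\psi(a)$ twist.
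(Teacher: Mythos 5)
Your route --- expanding Definition \ref{def:finite} and collapsing the inner sums with the distribution relation for $B_1$ --- is genuinely different from the paper's, which applies $T_n^{\psi}$ to the cocycle in Definition \ref{def:original}, evaluates at $z=\infty$ where $f_{\chi_1,\chi_2}$ vanishes, invokes the Hecke eigenform property $T_n^{\psi}f_{\chi_1,\chi_2}=\lambda_{\chi_1,\chi_2}(n,1)f_{\chi_1,\chi_2}$, and then identifies the arguments $(ah+bk,dk)$ via Lemma \ref{cores} and Proposition \ref{scale}. An elementary proof along your lines should exist (the paper cites one for the classical identity), but as sketched your argument breaks at its final step, and the break is structural rather than a matter of bookkeeping.

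The core error is the claim that each divisor pair's contribution ``reassembles into $\dsum(ah,k)$, which the crossed homomorphism converts to $\psi(a)\dsum(h,k)$.'' This fails even when $\gcd(a,k)=1$: writing $\begin{bmatrix} ah & * \\ k & * \end{bmatrix}=\delta\begin{bmatrix} h & * \\ k & * \end{bmatrix}$ gives some $\delta\in\Gamma_0(q_1q_2)$ with no reason for $\dsum(\delta)$ to vanish, so $\dsum(ah,k)$ is not a scalar multiple of $\dsum(h,k)$. The identity is not term-by-term: classically, for $n=p$ one has $s(ph,k)+\sum_{b}s(h+bk,pk)=(p+1)s(h,k)$ while, e.g., $s(2,5)=0\neq \tfrac15=s(1,5)$ (and indeed $s(1,10)+s(6,10)=\tfrac35$ supplies the entire right-hand side). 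Consistently with this, your weight count does not close: collecting $e\,\chi_1(d')\,\oo{\chi_2}(e)$ over triples $a e d'=n$ is a triple divisor sum that does not reduce to the double sum $\rho_{\chi_1,\chi_2}(n)=\sum_{D\mid n}\chi_1(n/D)\oo{\chi_2}(D)D$; reindexing by $D=ae$ produces $\sigma$-type factors rather than the single weight $D$. Both discrepancies must be repaired by the cross-terms generated when you M\"obius-invert the constraint $\gcd(j_0,d')=1$ --- exactly the step you defer --- so that is where the entire content of the identity lives, and the proposal as written has a genuine gap. If you want to avoid this, the paper's Hecke-operator argument sidesteps all of it: the eigenvalue $\sqrt{n}\,\lambda_{\chi_1,\chi_2}(n,1)=\rho_{\chi_1,\chi_2}(n)$ emerges for the full sum at once, with Proposition \ref{scale} handling the reduction $\dsum(h',k')=\dsum(ah+bk,dk)$.
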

\noindent This identity is not only beautiful, but will also allow us to deduce the following:
\begin{proposition}
\label{independence}
The newform Dedekind sums definded in \ref{def:finite} are linearly independent in $Hom(\Gamma_1(q_1q_2),\C)$.
\end{proposition}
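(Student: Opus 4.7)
The strategy combines two separating mechanisms. First, the crossed homomorphism property (Proposition \ref{crossed}) will sort the sums by their central character $\psi=\chi_1\overline{\chi_2}$. Second, the generalized Knopp identity (Theorem \ref{thm:newknopp}) will, within a fixed $\psi$-isotypic piece, distinguish sums attached to different tuples of primitive characters via their Hecke-type eigenvalues $\rho_{\chi_1,\chi_2}(n)$.

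Suppose we have a relation $\sum_i c_i\, S_{\chi_1^{(i)},\chi_2^{(i)}}=0$ in $\text{Hom}(\Gamma_1(N),\C)$, where $N$ is a common multiple of the $q_1^{(i)}q_2^{(i)}$. Two applications of Proposition \ref{crossed}, using $\psi|_{\Gamma_1(N)}\equiv 1$ together with the identity $\psi(\gamma_0)S(\gamma_0^{-1})=-S(\gamma_0)$ (which follows from $S(e)=0$), yield
\[
S_{\chi_1,\chi_2}(\gamma_0\gamma\gamma_0^{-1})=\psi(\gamma_0)\,S_{\chi_1,\chi_2}(\gamma),\qquad \gamma_0\in\Gamma_0(N),\ \gamma\in\Gamma_1(N).
\]
Thus $S_{\chi_1,\chi_2}$ lies in the $\psi$-isotypic component of the conjugation action of $\Gamma_0(N)/\Gamma_1(N)\cong(\Z/N\Z)^{\times}$ on $\text{Hom}(\Gamma_1(N),\C)$. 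Averaging the relation against characters of $(\Z/N\Z)^{\times}$ and invoking orthogonality reduces the problem to the case where every tuple in the relation shares a single common central character $\psi$.

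Within that fixed $\psi$, I would introduce the operator $T_n^{\psi}f(h,k):=\sum_{ad=n,\,(a,n)=1}\psi(a)\sum_{b\Mod{d}}f(ah+bk,dk)$. Theorem \ref{thm:newknopp} is precisely the eigenvalue identity $T_n^{\psi}S_{\chi_1,\chi_2}=\rho_{\chi_1,\chi_2}(n)\,S_{\chi_1,\chi_2}$, so applying $T_n^{\psi}$ termwise to the relation produces a new relation $\sum_i c_i\,\rho_{\chi_1^{(i)},\chi_2^{(i)}}(n)\,S_{\chi_1^{(i)},\chi_2^{(i)}}=0$ for every $n\geq 1$. Distinct tuples yield distinct eigenvalue sequences: at a prime $p\nmid N$ one has $\rho_{\chi_1,\chi_2}(p)=\chi_1(p)+p\,\overline{\chi_2}(p)$, and equality of two such values forces $\overline{\chi_2}(p)=\overline{\chi_2'}(p)$ by the size estimate $|\chi_1(p)-\chi_1'(p)|\leq 2<p$ for $p>2$, hence $\chi_1(p)=\chi_1'(p)$; Dirichlet's theorem on primes in arithmetic progressions then forces the characters themselves to coincide. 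A standard Vandermonde/induction on the length of the relation, combined with $S_{\chi_1,\chi_2}\neq 0$ (implied by Theorem \ref{thm:structure}), extracts each $c_i=0$.

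The main obstacle I foresee is checking that $T_n^{\psi}$ acts coherently on the common-level realization — that the identity of Theorem \ref{thm:newknopp} applies uniformly once every $S_{\chi_1^{(i)},\chi_2^{(i)}}$ has been restricted to $\Gamma_1(N)$. The eigenvalue-separation step and the closing induction are routine once that bookkeeping is in place.
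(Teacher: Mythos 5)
Your proposal follows essentially the same route as the paper: a decomposition of $\mathrm{Hom}(\Gamma_1(N),\C)$ into $\psi$-isotypic pieces under the $\Gamma_0(N)/\Gamma_1(N)$ action (the paper packages your conjugation-and-averaging step as explicit projections $\pi_{\psi}$ in Proposition \ref{decomp}), followed by Hecke-eigenvalue separation via Theorem \ref{thm:newknopp} and a minimality/induction argument within each fixed $\psi$. Two small points deserve repair. First, your archimedean estimate is too optimistic as stated: from $\chi_1(p)-\chi_1'(p)=p\bigl(\overline{\chi_2'}(p)-\overline{\chi_2}(p)\bigr)$ the bound $|\chi_1(p)-\chi_1'(p)|\le 2<p$ does not force $\overline{\chi_2}(p)=\overline{\chi_2'}(p)$ for all $p>2$, since two distinct roots of unity of large order can be at distance much less than $2/p$; you need $p$ larger than a constant depending on the orders of the characters (the differences take finitely many values, so there is a positive minimum for the nonzero ones), which is harmless since Dirichlet's theorem supplies arbitrarily large primes in each class --- the paper instead takes field norms and uses divisibility by $p$ to the same effect. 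Second, citing Theorem \ref{thm:structure} for $S_{\chi_1,\chi_2}\neq 0$ is circular: the paper's proof of that theorem invokes the linear independence you are trying to establish, so the nonvanishing of a single sum must be obtained independently (e.g.\ directly from Definition \ref{def:finite} or from \cite{SVY}); note the paper's own minimal-dependent-set argument also tacitly relies on this nonvanishing to rule out a dependent set of size one.
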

\begin{corollary} 
\label{thm:values}
    Let $F$ be the smallest field over $\Q$ in which the newform Dedekind sum  $\dsum$ takes values. Then $F=F_{\chi_1,\chi_2}$.
\end{corollary}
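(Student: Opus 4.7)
The plan is to show $F = F_{\chi_1,\chi_2}$ by proving both containments, with the bulk of the work already packaged inside Theorem \ref{thm:structure}.

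For the easy direction $F \subseteq F_{\chi_1,\chi_2}$, I would work directly from Definition \ref{def:finite}. Every term of $\dsum(h,k)$ is a product of a value of $\overline{\chi_2}$, a value of $\overline{\chi_1}$, and two values of $B_1$, which are rational. Since $F_{\chi_1,\chi_2}$ contains the values of both characters and their complex conjugates (it is a cyclotomic extension and hence Galois, stable under complex conjugation), every such product lies in $F_{\chi_1,\chi_2}$, and thus so does every finite sum of them. This shows the values of $\dsum$ on all of $\Gamma_0(q_1q_2)$ sit inside the field $F_{\chi_1,\chi_2}$, giving $F \subseteq F_{\chi_1,\chi_2}$.

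For the reverse containment $F_{\chi_1,\chi_2} \subseteq F$, I would invoke Theorem \ref{thm:structure}: the image $\dsum(\Gamma_1(q_1q_2))$ is a lattice of full rank inside $F_{\chi_1,\chi_2}$. "Full rank" means the $\Z$-rank of this image equals $[F_{\chi_1,\chi_2}:\Q]$, so the $\Q$-span of these values is all of $F_{\chi_1,\chi_2}$. Since $F$ is a subfield of $\C$ containing $\Q$ together with every value of $\dsum$, it contains the entire $\Q$-span of those values, namely $F_{\chi_1,\chi_2}$. Combining the two containments yields equality.

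In effect there is no new obstacle here: the corollary is a direct repackaging of the full-rank assertion in Theorem \ref{thm:structure}. The genuine difficulty was already absorbed earlier, first in establishing the Generalized Knopp Identity (Theorem \ref{thm:newknopp}), then in using it to derive linear independence (Proposition \ref{independence}), and finally in passing from linear independence to full-rank lattice structure. Once those are in hand, the main step for Corollary \ref{thm:values} is simply the elementary fact that a full-rank lattice in a number field $\Q$-spans it.
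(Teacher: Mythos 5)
Your proposal is correct, but the reverse inclusion is obtained by a genuinely different route than the paper's. The paper does not go through Theorem \ref{thm:structure} at all: it recalls the Galois action $\sigma \dsum = S_{\chi_1^{\sigma} \chi_2^{\sigma}}$ from \cite{NRY}, notes that any $\sigma \in \mathrm{Gal}(K/F)$ (with $K$ the ambient cyclotomic field) fixes every value of $\dsum$ and hence gives an equality of functions $\dsum = S_{\chi_1^{\sigma} \chi_2^{\sigma}}$, and then uses the linear independence of Proposition \ref{independence} to force $\chi_1^{\sigma} = \chi_1$ and $\chi_2^{\sigma} = \chi_2$; Galois theory then gives $F_{\chi_1,\chi_2} \subseteq F$. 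You instead extract the inclusion from the full-rank lattice statement: since $\dsum(\Gamma_1(q_1q_2))$ is free of rank $[F_{\chi_1,\chi_2}:\Q]$ inside $F_{\chi_1,\chi_2}$, its $\Q$-span is the whole field and therefore lies in $F$. Both arguments are valid, and both ultimately rest on Proposition \ref{independence} (the proof of Theorem \ref{thm:structure} invokes it for the lower bound on the rank), so there is no circularity in your version. The trade-off is that your route imports the entire machinery of Section 3 (finite generation of $\Gamma_1(q_1q_2)$, the lattice bounds), whereas the paper's Galois argument is available immediately after Proposition \ref{independence}; on the other hand, your route records in passing the slightly stronger fact that the values of $\dsum$ already span $F_{\chi_1,\chi_2}$ over $\Q$ rather than merely generating it as a field. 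The easy containment $F \subseteq F_{\chi_1,\chi_2}$ you handle exactly as the paper does, directly from Definition \ref{def:finite}.
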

The acquainted reader will see that Theorem \ref{thm:newknopp} and Proposition \ref{independence} are consequences of the Eichler-Shimura isomorphism. We do not make use of the Eichler-Shimura isomorphism in this paper to minimize prerequisite knowledge and due to the apparent lack of accessible discussion of the Eisenstein part in the literature.
\section{Theorem \ref{thm:newknopp} and Corollaries}
\subsection{Preliminaries}
Our proof of the identity comes from the action of the Hecke operators on the newform Dedekind sums (which we will now call Dedekind sums for brevity) and the weight zero Eisenstein series from which they are derived.

\begin{definition}
Let $\chi_1,\chi_2$ be primitive Dirichlet characters modulo $q_1$ and $q_2$ (respectively) such that $\chi_1\chi_2(-1) = 1$. The completed weight zero newform Eisenstien series is defined as
    \begin{equation*}
        E^*_{\chi_1,\chi_2}(z,s)=\frac{(q_2/\pi)^s}{\tau(\chi_2)}\Gamma(s)L(2s,\chi_1\chi_2)E_{\chi_1,\chi_2}(z,s),
    \end{equation*}
where $\tau(\chi)$ is the Gauss sum of a character $\chi$, $\Gamma(s)$ is the gamma function, $L(s,\chi)$ is a Dirichlet L-function with character $\chi$, and $E_{\chi_1,\chi_2}(z,s)$ is the weight zero newform Eisenstein series:

\begin{equation*}
    E_{\chi_1,\chi_2}(z,s) = \frac{1}{2}\sum_{(c,d)=1}\frac{(q_2y)^s\chi_1(c)\chi_2(d)}{|cq_2z+d|^{2s}}, \: Re(s) > 1.
\end{equation*}
\end{definition} 

\noindent The completed series has a Fourier expansion (see \cite{Calcs}):
    \begin{equation*}
        2\sqrt{y}\sum_{n\neq0}\lambda_{\chi_1,\chi_2}(n,s)\exp(2\pi i n x)K_{s-\frac{1}{2}}(2\pi|n|y).
    \end{equation*}
Here, $K_{\nu}$ is the K-Bessel function and 
    \begin{equation*}
        \lambda_{\chi_1,\chi_2}(n,s) = \chi_2(\sgn(n))\sum_{ad=|n|}\chi_1(a)\oo{\chi_2}(b)\bigg(\frac{b}{a}\bigg)^{s-\frac{1}{2}}.
    \end{equation*}

\noindent Importantly, $E^*_{\chi_1\chi_2}(z,s)$ is an automorphic form on the congruence subgroup $\Gamma_0(q_1q_2)$ with central character $\psi = \chi_1\oo{\chi_2}.$ At $s = 1$ we have the decomposition 
\begin{equation*}
    E^*_{\chi_1\chi_2}(z,1) = f_{\chi_1,\chi_2}(z) + \chi_2(-1)\oo{f}_{\oo{\chi_1},\oo{\chi_2}}(z),
\end{equation*}
where
\begin{equation*}
    f_{\chi_1,\chi_2} = \sum_{n=1}^{\infty}\frac{\lambda_{\chi_1,\chi_2}(n,1)}{\sqrt{n}}\exp(2\pi i n z).
\end{equation*}
\noindent The original definition of the Dedekind sum is as follows:
\begin{definition}[\cite{SVY}]
Let $\chi_1,\chi_2$ be primitive Dirichlet characters modulo $q_1,q_2$ with $q_1,q_2>1$ and $\chi_1\chi_2(-1)=1$. Then for $\gamma\in\Gamma_0(q_1q_2)$ we define $\dsum(\gamma)$ by
\label{def:original}
    $$\dsum(\gamma,z)=\dsum(\gamma) = \frac{\tau(\overline{\chi_1})}{\pi i}( f_{\chi_1,\chi_2}(\gamma z) - \psi(\gamma)f_{\chi_1,\chi_2}(z)).$$
\end{definition}

\begin{remark}
The formula given in Definition \ref{def:finite} is deduced as a theorem in [SVY20]. 
\end{remark}
A brief argument in \cite{SVY} shows $f_{\chi_1,\chi_2}(\gamma z) - \psi(\gamma)f_{\chi_1,\chi_2}(z)$ is both holomorphic and antiholomorphic, and thus $\dsum$ is independent of $z$, motivating the notation of Definition \ref{def:original}. Additionally, Definition \ref{def:finite} combined with Proposition \ref{crossed} and the periodicity of $f_{\chi_1,\chi_2}$ show that $\dsum(\gamma)$ is unchanged under translation, and is determined by only the left column of $\gamma$.

To define the Hecke operators, we compile some results from \cite{Iwan}:
\begin{lemma} \label{cores}
For  
$$\Delta^N_n=\bigg\{\abd : ad = n,\, (a,N) = 1,\, 0 \leq b < d\bigg\},$$
there exists a correspondence between $\Delta^{N}_n \cross \Gamma_0(N)$ and $\Gamma_0(N) \cross \Delta^{N}_n$.
Specifically,
$$\abd
\begin{bmatrix}
        h & * \\
        k & l
     \end{bmatrix}=
     \begin{bmatrix}
        h' & * \\
        k' & l'
     \end{bmatrix}
     \begin{bmatrix}
        a' & b' \\
        0 & d'
     \end{bmatrix}
     $$
    where
    $$h'=\frac{ah + bk}{(ah + bk,dk)},\qquad
    k'=\frac{dk}{(ah + bk,dk)}.$$
\end{lemma}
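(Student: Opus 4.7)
\textbf{Proof plan for Lemma \ref{cores}.}
The natural approach is to start from the product on the left and read off the decomposition forced by the entries. Write the product directly:
\begin{equation*}
\abd \hkl \;=\; \begin{bmatrix} ah+bk & a*+bl \\ dk & dl \end{bmatrix}.
\end{equation*}
If this product equals $\begin{bmatrix} h' & * \\ k' & l' \end{bmatrix}\abdp$, then comparing first columns gives $a'h' = ah+bk$ and $a'k' = dk$, while comparing determinants forces $a'd' = ad = n$. Setting $g := (ah+bk,\,dk)$ and taking $a' = g$, $h' = (ah+bk)/g$, $k' = dk/g$, $d' = n/g$ produces the formulas in the statement and a candidate first column for $\gamma' \in \Gamma_0(N)$ with $(h',k')=1$ automatic.

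The content of the lemma is then to check that $\gamma'$ really lies in $\Gamma_0(N)$ and $\sigma' := \abdp$ really lies in $\Delta_n^N$. I expect the only non-cosmetic step to be the claim $(g,N)=1$, which is what simultaneously delivers $N \mid k'$ and $(a',N)=1$. To prove it, suppose a prime $p$ divides both $g$ and $N$. Since $N \mid k$ we have $p \mid k$, and since $p \mid g \mid ah+bk$ we get $p \mid ah$. But $(a,N)=1$ forces $p \nmid a$, so $p \mid h$, contradicting $(h,k)=1$. With $(g,N)=1$ in hand, $N \mid k = k'g/d \cdot \text{(trivial rearrangement)}$ gives $N \mid k'$, so $\gamma' \in \Gamma_0(N)$, and $(a',N) = (g,N) = 1$ puts $\sigma'$ in the determinant-$n$ upper-triangular set after normalizing $b'$.

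The remaining bookkeeping is: normalize $b'$ modulo $d'$ by multiplying $\sigma'$ on the left by an appropriate translation $\begin{bmatrix}1 & t \\ 0 & 1\end{bmatrix} \in \Gamma_0(N)$ and absorbing the inverse translation into $\gamma'$ (this preserves the left column $(h',k')$ and hence the formulas). For the bijection claim, the inverse map starts from $(\gamma', \sigma') \in \Gamma_0(N) \times \Delta_n^N$, forms the same product $\gamma'\sigma'$, and decomposes it as $\sigma \gamma$ by the symmetric argument (swapping the roles of upper-triangular and congruence factors); uniqueness follows because $\sigma'$ is the unique representative of $\Gamma_0(N)(\sigma\gamma)$ in $\Delta_n^N$ with $0 \le b' < d'$, after which $\gamma'$ is determined.

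The main obstacle is the coprimality $(g,N)=1$; once that is established everything else is matrix bookkeeping, and the explicit formulas for $h'$ and $k'$ fall out of the first column comparison.
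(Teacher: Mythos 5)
The paper does not actually prove this lemma; it is quoted from the reference \cite{Iwan} with no argument given, so there is no in-paper proof to compare against. Your direct verification is the standard one and its core is correct: the first-column comparison forces $a'=\gcd(ah+bk,dk)=g$ (up to sign) because $(h',k')=1$ for $\gamma'\in SL_2(\Z)$, the determinant gives $a'd'=n$, and your argument for $(g,N)=1$ (a common prime $p$ of $g$ and $N$ would divide $k$, hence $ah$, hence $h$ since $(a,N)=1$, contradicting $(h,k)=1$) is exactly the right key step; from $k'g=dk$, $N\mid k$, and $(g,N)=1$ one gets $N\mid k'$, and the normalization of $b'$ by a left translation absorbed into $\gamma'$ preserves the left column as you say.

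One caveat concerns the "correspondence" clause. Your proposed inverse map decomposes $\gamma'\sigma'$ as $\sigma\gamma$ "by the symmetric argument," but this would require $\Delta_n^N$ to also be a set of representatives for the right cosets $\sigma\Gamma_0(N)$, which it is not: already for $n=p$ prime, $\begin{bmatrix}1&0\\0&p\end{bmatrix}$ and $\begin{bmatrix}1&1\\0&p\end{bmatrix}$ lie in the same right coset, so the map $(\sigma,\gamma)\mapsto(\gamma',\sigma')$ on the full product sets is not injective. What is true, and what Definition \ref{def:hecke} actually uses, is that for each \emph{fixed} $\gamma\in\Gamma_0(N)$ the assignment $\sigma\mapsto\sigma'$ is a permutation of $\Delta_n^N$ (right multiplication by $\gamma$ permutes the left cosets $\Gamma_0(N)\sigma$, and $\sigma'$ is the unique representative of the resulting coset); injectivity for fixed $\gamma$ follows since $\sigma_1\gamma$ and $\sigma_2\gamma$ lie in the same left coset only if $\sigma_1,\sigma_2$ do. Restate the bijection in that fibered form and your proof is complete; the admittedly loose phrasing of the lemma invites the confusion.
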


We now give the definition of the Hecke operators on $H^1(\Gamma_0(N),\C^{\psi}).$
\begin{definition}
\label{def:hecke}
For $n\in\mathbb{N}$ and $\varphi \in H^1(\Gamma_0(N),\C^{\psi})$, the Hecke operator $T_n^{\psi}$ acts on $\varphi$ by
\begin{equation*}
    T_n^{\psi}(\varphi)(\gamma) = \frac{1}{\sqrt{n}}\sum_{\substack{ad=n\\(a,n)=1}}\psi(a)\sum_{b\Mod{d}}
                          \varphi\bigg(\abd\gamma\abdp^{-1}\bigg),
\end{equation*}
where $\abd$ and $\abdp$ are elements of $\Delta^{N}_n$, and have the same relations to $\gamma=\hkl$ as in Lemma \ref{cores}. We note the normalization chosen follows the conventions of \cite{Calcs}.
\end{definition}
The Hecke operators arise from defining double coset operators on $H^1(\Gamma_0(N),\C^{\psi})$ (see Section \ref{sec:coset}) and act linearly on $H^1(\Gamma_0(N),\C^{\psi})$. The Hecke operators are more well known in the context of modular forms and periodic functions on the upper half plane. We mention that the operator $T_n^{\psi}$ acts on weight zero periodic functions by
\begin{equation*}
    T_n^{\psi}(f)(z) = \frac{1}{\sqrt{n}}\sum_{\substack{ad=n\\(a,n)=1}}\psi(a)\sum_{b\Mod{d}} f\bigg(\frac{az + b}{d}\bigg).
\end{equation*}
\subsection{Proof of Theorem \ref{thm:newknopp}}
The identity will follow from the application of $T_n^{\psi}$ to $\dsum$. Specifically, we use Definition \ref{def:original} and calculate with $z=\infty$:

\begin{align}
    T_n^{\psi} \dsum(\gamma) &= \frac{\tau(\overline{\chi_1})}{\sqrt{n}\pi i}\bigg[\sum_{ad=n}\psi(a)\sum_{b\Mod{d}} f_{\chi_1,\chi_2}\bigg(\abd\gamma\abdp^{-1}\infty\bigg) - \psi(\gamma') f_{\chi_1,\chi_2}(\infty)\bigg] \nonumber \\
    &= \frac{\tau(\overline{\chi_1})}{\sqrt{n} \pi i}\sum_{\substack{ad=n\\(a,n)=1}}\psi(a)\sum_{b\Mod{d}} f_{\chi_1,\chi_2}\bigg(\abd\gamma \, \infty \bigg)\qquad (\text{as}\; f_{\chi_1,\chi_2}(\infty) = 0). \label{eq:tnf}
\end{align}
We see that \eqref{eq:tnf} is exactly $\frac{\tau(\overline{\chi_1})}{\pi i}T_n^{\psi}(f_{\chi_1,\chi_2})(\gamma\infty)$.

As noted in \cite{Calcs}, the completed weight zero newform Eisenstein series $E^*_{\chi_1\chi_2}(z,s)$ is an eigenfunction of the Hecke operators:
\begin{equation} \label{eq:2}
    T_n^{\psi} (E^*_{\chi_1\chi_2})(z,s) = \lambda_{\chi_1,\chi_2}(n,s)E^*_{\chi_1\chi_2}(z,s).
\end{equation}
When we apply \eqref{eq:2} at $s=1$ we can deduce
\begin{equation} \label{eq:3}
    T_n^{\psi} (f_{\chi_1\chi_2})(z) = \lambda_{\chi_1,\chi_2}(n,1) f_{\chi_1\chi_2}(z),
\end{equation}
since the Hecke operators preserve holomorphicity.

Therefore when we combine \eqref{eq:tnf} and \eqref{eq:3} we have
\begin{align*}
    \frac{\tau(\overline{\chi_1})}{\pi i}\sum_{\substack{ad=n\\(a,n)=1}}\psi(a)\sum_{b\Mod{d}} f_{\chi_1,\chi_2}(\abd\gamma \, \infty)
    &= \frac{\tau(\overline{\chi_1})}{\pi i} T_n^{\psi}(f_{\chi_1,\chi_2})(\gamma \infty)\\
    &= \frac{\tau(\overline{\chi_1})}{\pi i} \lambda_{\chi_1,\chi_2}(n,1)f_{\chi_1,\chi_2}(\gamma \infty)\\
    &= \lambda_{\chi_1,\chi_2}(n,1)\dsum(\gamma).  
\end{align*}

\noindent Next we need to justify the arguments of $\dsum$ in Theorem \ref{thm:newknopp}. Note that we can extend Definition \ref{def:finite} for any $h \in \Z, k \equiv 0 \, (q_1q_2)$, and we have the following proposition from \cite{DG}:
\begin{proposition}
\label{scale}
Let $h$ and $k$ be coprime integers with $q_1q_2 | k$. Then for all positive integers $\alpha$
$$
    \dsum(\alpha h,\alpha k) = \dsum(h,k).
$$
\end{proposition}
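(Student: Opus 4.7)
The plan is to expand $S_{\chi_1,\chi_2}(\alpha h,\alpha k)$ from the finite-sum definition (Definition~\ref{def:finite}, suitably extended to non-coprime pairs as mentioned in the text just above) and then collapse the inner sum using the distribution relation for the sawtooth function. Writing
\[
S_{\chi_1,\chi_2}(\alpha h,\alpha k)
=\sum_{j\,(\mathrm{mod}\,\alpha k)}\sum_{n\,(\mathrm{mod}\,q_1)}
\overline{\chi_2}(j)\,\overline{\chi_1}(n)\,
B_1\!\left(\tfrac{j}{\alpha k}\right)
B_1\!\left(\tfrac{n}{q_1}+\tfrac{hj}{k}\right),
\]
I would parametrize $j\pmod{\alpha k}$ as $j=i+km$ with $i$ ranging over residues modulo $k$ and $m\in\{0,1,\ldots,\alpha-1\}$.

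With this substitution two simplifications occur. First, because $q_2\mid k$, the character $\overline{\chi_2}$ satisfies $\overline{\chi_2}(i+km)=\overline{\chi_2}(i)$, so the $\overline{\chi_2}$-factor is independent of $m$. Second, $\tfrac{hj}{k}=\tfrac{hi}{k}+hm$ differs from $\tfrac{hi}{k}$ by an integer, so the second Bernoulli factor also loses its $m$-dependence. Only the first factor $B_1\!\bigl(\tfrac{j}{\alpha k}\bigr)=B_1\!\bigl(\tfrac{i}{\alpha k}+\tfrac{m}{\alpha}\bigr)$ still involves $m$.

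Pulling out everything independent of $m$ reduces the problem to evaluating
\[
\sum_{m=0}^{\alpha-1}B_1\!\left(\tfrac{i}{\alpha k}+\tfrac{m}{\alpha}\right).
\]
For this I would invoke the distribution relation for the periodic sawtooth, namely $\sum_{m=0}^{\alpha-1}B_1\!\bigl(x+\tfrac{m}{\alpha}\bigr)=B_1(\alpha x)$, which is straightforward to check from the piecewise definition (and handles the integer case, which contributes $0$, separately). Applied with $x=\tfrac{i}{\alpha k}$, it yields $B_1\!\bigl(\tfrac{i}{k}\bigr)$, so the full expression collapses exactly to $S_{\chi_1,\chi_2}(h,k)$ by Definition~\ref{def:finite}.

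The only delicate step is the distribution relation, since our $B_1$ is the discontinuous sawtooth rather than the polynomial $x-\tfrac{1}{2}$; one must verify that the convention $B_1(x)=0$ for $x\in\Z$ does not spoil the identity at the finitely many exceptional $x$, and that the hypothesis $q_1q_2\mid k$ (in particular $q_2\mid k$) is what lets the $\overline{\chi_2}$-factor pass through the $m$-summation. Everything else is bookkeeping.
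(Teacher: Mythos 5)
Your argument is correct: the substitution $j=i+km$, the periodicity of $\overline{\chi_2}$ modulo $q_2\mid k$, the integrality of $hm$, and the distribution relation $\sum_{m=0}^{\alpha-1}B_1\bigl(x+\tfrac{m}{\alpha}\bigr)=B_1(\alpha x)$ (which does hold with the convention $B_1(\Z)=0$, since exactly one term in the exceptional case vanishes and the rest still telescope correctly) collapse the sum to $\dsum(h,k)$. Note that the paper gives no proof of this proposition at all --- it is quoted from \cite{DG} --- and your elementary computation is precisely the standard argument used there, so it fills in the omitted details rather than diverging from them.
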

\noindent Then for $h$ and $k$ coprime with $q_1q_2|k$, we see by Lemma \ref{cores} that
\begin{equation*}
    T_n^{\psi}\dsum(h,k) = \frac{1}{\sqrt{n }}\sum_{ad=n}\psi(a)\sum_{b\Mod{d}}\dsum(h',k'),
\end{equation*}
and an application of Proposition \ref{scale} gives $\dsum(h',k') = \dsum(ah+bk,dk)$. Proposition \ref{scale} also removes the coprimality condition on $h$ and $k$.
Finally, we see $\sqrt{n}\lambda_{\chi_1,\chi_2}(n,1) = \rho_{\chi_1,\chi_2}(n)$, proving the identity. 

\begin{remark}
We have that $\rho_{\chi_1,\chi_2}(n)$ are the Fourier coefficients of $E_{2,\chi_1 \chi_2}$, the holomorphic newform Eisenstein series of weight $2$ (see \cite{DS}).
\end{remark}

\subsection{Proof of Proposition \ref{independence}}
We start with the case where all sums have the same central character, and show independence by the Hecke operators.
\begin{lemma} \label{lem:25}
Let $\{(\chi_{k},\chi_k')\}_{k=1}^r$ be a set of distinct ordered pairs of characters modulo $q, q'$. Then there exists a positive integer $n$ where $\rho_{\chi_i,\chi_i'}(n) \neq \rho_{\chi_j,\chi_j'}(n)$ for some $i \neq j$.
\end{lemma}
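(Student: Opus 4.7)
The plan is to evaluate $\rho_{\chi,\chi'}(n)$ at primes not dividing $qq'$ and then use Dirichlet's theorem on primes in arithmetic progressions to separate distinct pairs of characters.

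First I would specialize the definition of $\rho$ to prime arguments. If $p$ is a prime with $p \nmid qq'$, the only divisors of $p$ are $1$ and $p$, so directly from the formula
\begin{equation*}
    \rho_{\chi,\chi'}(p) = \chi(p) + p\,\overline{\chi'}(p).
\end{equation*}
The right side is an affine function of $p$ whose two coefficients $\chi(p)$ and $\overline{\chi'}(p)$ depend only on the residue class of $p$ modulo $qq'$.

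I would then prove the stronger claim that whenever $(\chi_i,\chi_i')\neq(\chi_j,\chi_j')$, some $n$ separates the two Hecke eigenvalues; this clearly implies the lemma as stated. Assume for contradiction that $\rho_{\chi_i,\chi_i'}(n)=\rho_{\chi_j,\chi_j'}(n)$ for every $n$. Fix an integer $m$ coprime to $qq'$. By Dirichlet's theorem there are infinitely many primes $p\equiv m\pmod{qq'}$, and for each such $p$ the displayed formula forces
\begin{equation*}
    \bigl[\chi_i(m)-\chi_j(m)\bigr] + p\,\bigl[\overline{\chi_i'}(m)-\overline{\chi_j'}(m)\bigr] = 0.
\end{equation*}
An affine polynomial in $p$ vanishing at infinitely many integers has both coefficients zero, giving $\chi_i(m)=\chi_j(m)$ and $\overline{\chi_i'}(m)=\overline{\chi_j'}(m)$. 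Letting $m$ range over all classes coprime to $qq'$ and using the Chinese remainder theorem to vary the residues modulo $q$ and $q'$ independently, I conclude $\chi_i=\chi_j$ and $\chi_i'=\chi_j'$, contradicting the distinctness assumption.

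The main obstacle is really only bookkeeping: confirming that $\chi(p)$ is determined by $p$ modulo $q$ (immediate) and that Dirichlet's theorem supplies enough primes after excluding the finitely many dividing $qq'$. One could alternatively observe that the Dirichlet series $\sum_{n\geq 1}\rho_{\chi,\chi'}(n)n^{-s}$ factors as $L(s,\chi)L(s-1,\overline{\chi'})$ and invoke uniqueness of such Euler product factorizations, but the elementary argument above avoids any analytic machinery.
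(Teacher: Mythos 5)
Your proof is correct, and it follows the same overall strategy as the paper: reduce to the prime values $\rho_{\chi,\chi'}(p)=\chi(p)+p\,\overline{\chi'}(p)$ and use Dirichlet's theorem on primes in arithmetic progressions to force the two pairs of characters to agree. The finishing step differs in an instructive way. The paper fixes a residue class, notes that the algebraic integer $\chi_i(p)-\chi_j(p)=p[\overline{\chi_j'}(p)-\overline{\chi_i'}(p)]$ has rational integer field norm divisible by $p$, and uses the fact that differences of character values take only finitely many values (hence bounded norm) to conclude the difference vanishes once $p$ is large enough; this needs only one sufficiently large prime per class. You instead use infinitely many (really just two distinct) primes in the same class and observe that an affine function of $p$ vanishing at two points has both coefficients zero. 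Your version avoids the norm computation entirely and is arguably cleaner; the paper's version is what lets the authors remark that Dirichlet's theorem could be dispensed with via an extended argument. One small imprecision: your appeal to the Chinese remainder theorem to ``vary the residues modulo $q$ and $q'$ independently'' is only literally valid when $\gcd(q,q')=1$; what you actually need (and what is true) is that every residue class mod $q$ coprime to $q$ is represented by some $m$ coprime to $qq'$, and likewise for $q'$, which follows from Dirichlet or from an elementary argument. This does not affect the validity of the proof.
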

\begin{proof}
Assume, for the sake of contradiction, that $\rho_{\chi_{i},\chi_i'}(n) = \rho_{\chi_{j},\chi_j'}(n)$ for all $n$ and all $i$ and $j.$ Fix $i,j$ where $i\neq j$. Then for any $p$ prime, we have 
\begin{equation}
\label{eq:6}
\chi_i(p)-\chi_j(p)=p[\oo{\chi_j'}(p)-\oo{\chi_i'}(p)].
\end{equation}
Note that since both the left and right hand sides of \eqref{eq:6} are algebraic integers, their field norms are rational integers. Additionally \eqref{eq:6} implies the norm of the left hand side is divisible by $p$. Then for each residue class coprime to $qq'$ we may use Dirichlet's theorem on primes in arithmetic progressions, along with the fact that the difference of Dirichlet characters can assume finitely many values, to pick sufficiently large primes in that class which force equality between $\chi_i$ and $\chi_j$, and $\chi_i'$ and $\chi_j'$ (we briefly note the use of Dirichlet's theorem can be forgone in favor of an extended argument). This contradicts the distinctness of our original set. 
\end{proof}
Again, by way of contradiction, assume $\{S_{\chi_{k},\chi_k'}\}_{k=1}^r$ is a minimal linearly dependent set of Dedekind sums of central character $\psi$. By Lemma \ref{lem:25}, let $n$ be such that $\rho_{\chi_{i},\chi_i'}(n) \neq \rho_{\chi_{j},\chi_j'}(n)$ for some $i$ and $j$. We apply $\sqrt{n}T_n^{\psi}$ to this combination, which by Theorem \ref{thm:newknopp} scales each term. After relabeling to make $i=1,$ we have
\begin{align*}
    0 &= \sum_{k=1}^r c_k S_{\chi_k,\chi_k'} \:\: (c_k \in \C - \{0\})\\
    &= \sum_{k=1}^r c_k(\rho_{\chi_k,\chi_k'}(n)-\rho_{\chi_1,\chi_1'}(n)) S_{\chi_{k},\chi_k'}
    = \sum_{k=2}^r c_k(\rho_{\chi_k,\chi_k'}(n)-\rho_{\chi_1,\chi_1'}(n)) S_{\chi_{k},\chi_k'},
\end{align*}
which gives a smaller linearly dependent set, contradicting minimality.\\

Independence for newform sums of differing central character is a consequence of the following:
\begin{proposition}
\label{decomp}
We have the decomposition of the space of homomorphisms $Hom(\Gamma_1(N),\C):$
$$Hom(\Gamma_1(N),\C) = \bigoplus_{\psi} H^1(\Gamma_0(N),\C^{\psi}),$$
where $H^1(\Gamma_0(N),\C^{\psi})$ has the group action by Dirichlet character $\psi$ modulo $N$:
$$ \gamma z = \psi(\gamma)z$$
for z $\in \C.$
\end{proposition}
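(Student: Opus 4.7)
The plan centers on the short exact sequence $1\to\Gamma_1(N)\to\Gamma_0(N)\to(\Z/N\Z)^{\times}\to 1$, whose quotient is a finite abelian group. I would first observe that the conjugation action of $\Gamma_0(N)$ on $\Gamma_1(N)$ induces a linear action on $\text{Hom}(\Gamma_1(N),\C)$ by $(\gamma\cdot f)(\eta)=f(\gamma^{-1}\eta\gamma)$, and that inner automorphisms from $\Gamma_1(N)$ act trivially because the codomain $\C$ is abelian: $(\eta_0\cdot f)(\eta)=f(\eta_0^{-1})+f(\eta)+f(\eta_0)=f(\eta)$. The action therefore factors through $(\Z/N\Z)^{\times}$, and standard character theory for a finite abelian group acting on a complex vector space produces the canonical isotypic decomposition
$$\text{Hom}(\Gamma_1(N),\C)=\bigoplus_{\psi}\text{Hom}(\Gamma_1(N),\C)[\psi],$$
indexed by Dirichlet characters $\psi$ modulo $N$.

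The second and main step is to identify each isotypic piece with $H^1(\Gamma_0(N),\C^{\psi})$. The candidate map is restriction $\text{res}:H^1(\Gamma_0(N),\C^{\psi})\to\text{Hom}(\Gamma_1(N),\C)$, which is well-defined as an honest homomorphism on $\Gamma_1(N)$ because $\psi|_{\Gamma_1(N)}=\mathbf{1}$. A direct computation using the cocycle identity gives $\varphi(\gamma^{-1}\eta\gamma)=\overline{\psi}(\gamma)\varphi(\eta)$ for all $\eta\in\Gamma_1(N)$, so $\text{res}\,\varphi$ lies in one specific isotypic summand, and the assignment from $\psi$ to that summand is a bijection on the set of characters.

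To conclude that $\text{res}$ is a linear isomorphism onto its target isotypic component, I would invoke the five-term inflation-restriction exact sequence
$$0\to H^1\bigl((\Z/N\Z)^{\times},\C^{\psi}\bigr)\to H^1(\Gamma_0(N),\C^{\psi})\xrightarrow{\text{res}}H^1(\Gamma_1(N),\C^{\psi})^{(\Z/N\Z)^{\times}}\to H^2\bigl((\Z/N\Z)^{\times},\C^{\psi}\bigr),$$
noting that both outer terms vanish because $(\Z/N\Z)^{\times}$ is finite and $\C$ has characteristic zero. Thus $\text{res}$ is an isomorphism onto the invariants, and a short unwinding shows these invariants coincide with the appropriate isotypic summand. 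Summing over $\psi$ yields the proposition. I expect the main obstacle to be bookkeeping around the two related $(\Z/N\Z)^{\times}$-actions in play: the bare conjugation action used to define the isotypic components on $\text{Hom}(\Gamma_1(N),\C)$, versus the twisted action built into the coefficient module $\C^{\psi}$ that governs $G/N$-invariants on the cohomology side. The two actions differ by $\psi$, which is exactly why invariants under one correspond to the $\psi$-isotypic component under the other, and keeping this straight is the one point where care is needed.
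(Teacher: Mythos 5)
Your proof is correct, but it takes a genuinely different route from the paper's. The paper establishes the decomposition by writing down explicit averaging (co-restriction) operators
$\pi_{\psi}(\varphi)(\gamma)=\frac{1}{\phi(N)}\sum_{j}\overline{\psi}(\beta_j)\,\varphi(\beta_j\gamma\beta_{\sigma_{\gamma}(j)}^{-1})$
over coset representatives $\beta_j$ of $\Gamma_0(N)/\Gamma_1(N)$, and then checks three things by direct computation: each $\pi_{\psi}$ lands in $H^1(\Gamma_0(N),\C^{\psi})$; $\pi_{\psi}$ acts as the identity on $H^1(\Gamma_0(N),\C^{\chi})$ when $\chi=\psi$ and as zero otherwise (the key step being that a leftover term is a principal crossed homomorphism, hence dies in $H^1$); and $\sum_{\psi}\pi_{\psi}$ is the identity on $\mathrm{Hom}(\Gamma_1(N),\C)$. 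You instead first split $\mathrm{Hom}(\Gamma_1(N),\C)$ into isotypic pieces for the conjugation action of $(\Z/N\Z)^{\times}$ and then identify each piece with $H^1(\Gamma_0(N),\C^{\psi})$ via restriction and the inflation-restriction exact sequence, using that $H^1$ and $H^2$ of the finite group $(\Z/N\Z)^{\times}$ with coefficients in a $\C$-vector space vanish. Both arguments ultimately rest on the same fact, namely that $\phi(N)$ is invertible in $\C$; yours outsources the verification to standard cohomological machinery and is shorter and more conceptual, while the paper's is elementary, self-contained, and produces explicit projection formulas that it reuses elsewhere (the $\pi_{\psi}$ are the co-restriction maps mentioned in its remark). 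Your closing caveat about the two $(\Z/N\Z)^{\times}$-actions differing by a twist of $\psi$ is exactly the right place to be careful, and your identity $\varphi(\gamma^{-1}\eta\gamma)=\overline{\psi}(\gamma)\varphi(\eta)$ resolves it correctly, so I see no gap.
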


\noindent We defer a proof of Proposition \ref{decomp} to Section \ref{sec:decompf} as it is routine in nature.

\subsection{Proof of Corollary \ref{thm:values}}
First we recall the definition of the Galois action defined in \cite{NRY}.
\begin{definition}
Let $K=Q(\zeta_{lcm(q_1,q_2)})$ be the $lcm(q_1,q_2)^{th}$ cyclotomic field and $\sigma \in Gal(K/\Q).$ Then by Definition \ref{def:finite}, the action of $\sigma$ on $\dsum$ by evaluation gives
    \begin{equation*}
        \sigma\dsum=S_{\chi^{\sigma}_1 \chi^{\sigma}_2}.
    \end{equation*}
\end{definition}
Again, let $F$ be the smallest field containing $\dsum(\Gamma_0(q_1q_2))$. First, we see by definition that $F\subseteq F_{\chi_1,\chi_2}$. For the reverse inclusion, pick $\sigma \in Gal(K/F).$ By the Galois action we have the equality 
$$\dsum=S_{\chi^{\sigma}_1 \chi^{\sigma}_2}.$$
Using Proposition \ref{independence}, we must have that $\chi_1=\chi_1^{\sigma}$ and $\chi_2=\chi_2^{\sigma}$. So $F_{\chi_1,\chi_2} \subseteq F$.

\begin{corollary}
$\dsum$ takes rational values if, and only if, $\chi_1$ and $\chi_2$ are quadratic characters.
\end{corollary}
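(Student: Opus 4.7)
The plan is to deduce this immediately from Corollary \ref{thm:values}. By that corollary, the smallest field over $\Q$ containing all values $\dsum(\gamma)$ is $F_{\chi_1,\chi_2}$, the smallest number field in which both characters take values. Hence $\dsum$ is $\Q$-valued if and only if $F_{\chi_1,\chi_2} = \Q$, which in turn is equivalent to both $\chi_1$ and $\chi_2$ taking values in $\Q$.

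The remaining step is the elementary observation that a Dirichlet character has rational values precisely when it is quadratic. First I would note that the values of a Dirichlet character $\chi$ lie in $\{0\} \cup \mu_{\infty}$, the union of $\{0\}$ with the roots of unity. The only rational roots of unity are $\pm 1$, so $\chi$ has values in $\Q$ if and only if $\chi(n) \in \{-1,0,1\}$ for all $n$, which is equivalent to $\chi^2 = \mathbf{1}$, i.e., $\chi$ is quadratic.

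Combining these, $\dsum$ takes rational values if and only if both $\chi_1$ and $\chi_2$ are quadratic, which is the desired equivalence. There is no genuine obstacle here: once Corollary \ref{thm:values} is in hand, the content is just the classification of rational-valued Dirichlet characters.
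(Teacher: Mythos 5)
Your proof is correct and is exactly the argument the paper intends: the corollary is stated immediately after Corollary \ref{thm:values} with no separate proof, precisely because it follows from $F=F_{\chi_1,\chi_2}$ together with the observation that a Dirichlet character is rational-valued iff it is quadratic (and here primitivity with $q_1,q_2>1$ rules out the trivial character, so real-valued indeed means order exactly two).
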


\section{Proof of Theorem \ref{thm:structure}}
We begin with two lemmas, the first being a consequence of Schreier's Lemma:
\begin{lemma}[\cite{schreier-lemma}]
\label{schrier}
Every finite index subgroup of a finitely generated group is finitely generated.
\end{lemma}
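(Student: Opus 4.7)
The plan is to establish this via Schreier's explicit construction of a generating set for $H$. Fix a finite generating set $S$ for $G$, which I may take to be symmetric (closed under inverses), and let $H \leq G$ satisfy $[G:H] = n < \infty$. I would begin by choosing a right transversal $T = \{t_1,\dots,t_n\}$ for $H$ in $G$ with $t_1 = 1$, and defining for each $g \in G$ the element $\overline{g} \in T$ to be the unique representative of the coset $Hg$. This yields a map $G \to T$ that will be the main bookkeeping device.

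The next step is to introduce the Schreier generators
$$\gamma(t,s) = t\,s\,\overline{t s}^{\,-1}, \qquad t \in T,\ s \in S.$$
Since $ts$ and $\overline{t s}$ lie in the same right coset of $H$, each $\gamma(t,s)$ belongs to $H$, and the collection is finite with at most $n|S|$ elements. The core of the argument is to show that the $\gamma(t,s)$ generate $H$. Given $h \in H$, I would write $h = s_1 s_2 \cdots s_k$ with $s_i \in S$, and then insert a telescoping string of transversal elements and their inverses:
$$h = \bigl(1 \cdot s_1 \cdot \overline{s_1}^{\,-1}\bigr)\bigl(\overline{s_1} \cdot s_2 \cdot \overline{s_1 s_2}^{\,-1}\bigr)\cdots\bigl(\overline{s_1\cdots s_{k-1}} \cdot s_k \cdot \overline{h}^{\,-1}\bigr)\overline{h}.$$
Each parenthesized block is a Schreier generator $\gamma(t, s_i)$ with $t = \overline{s_1 \cdots s_{i-1}}$, and since $h \in H$ forces $\overline{h} = 1$, the trailing factor drops out, expressing $h$ as a product of the $\gamma(t,s_i)$.

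The proof is largely bookkeeping; the main obstacle is simply keeping the transversal map straight across the insertions (in particular, verifying that consecutive blocks really telescope to $h$) and noting that the symmetry of $S$ is needed so every $h \in H$ admits a positive word in $S$. Once the telescoping identity is verified, finite generation of $H$ by the finite set $\{\gamma(t,s) : t \in T,\, s \in S\}$ is immediate.
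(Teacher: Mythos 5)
Your proof is correct and is the standard Schreier-generator argument: the telescoping identity is valid, each block $\overline{s_1\cdots s_{i-1}}\, s_i\, \overline{s_1\cdots s_i}^{\,-1}$ is indeed a Schreier generator because $\overline{g}$ depends only on the coset $Hg$, and $\overline{h}=1$ for $h\in H$ kills the trailing factor. The paper does not prove this lemma itself but cites it, and the cited argument is exactly this one, so there is nothing further to compare.
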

\begin{lemma}
The image $\dsum(\Gamma_1(q_1q_2))$ is a free abelian group.
\end{lemma}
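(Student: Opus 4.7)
The plan is to combine the standard structure theorem for finitely generated abelian groups with the fact, already observed in the remark after Proposition~\ref{crossed}, that $\dsum$ restricts to an honest homomorphism $\Gamma_1(q_1q_2)\to(\C,+)$. So the image is an abelian subgroup of $(\C,+)$, and I only need to show it is finitely generated and torsion-free.

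First, I would note that $\Gamma_1(q_1q_2)$ has finite index in $\SL$, which is finitely generated (by the standard generators $S$ and $T$). Applying Lemma~\ref{schrier}, $\Gamma_1(q_1q_2)$ is itself finitely generated. Consequently, its image under the group homomorphism $\dsum$ is a finitely generated abelian subgroup of $(\C,+)$.

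Next, I would observe that $(\C,+)$ is torsion-free, so any subgroup is torsion-free. Hence $\dsum(\Gamma_1(q_1q_2))$ is a finitely generated torsion-free abelian group, and the structure theorem for finitely generated abelian groups immediately gives that it is free abelian.

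There is essentially no obstacle here: the only point that needs care is confirming that $\dsum$ really is a homomorphism on $\Gamma_1(q_1q_2)$ (so that the image is a subgroup rather than merely a set), which follows because the central character $\psi=\chi_1\overline{\chi_2}$ is trivial on $\Gamma_1(q_1q_2)$ and hence the crossed homomorphism property of Proposition~\ref{crossed} reduces to additivity. The real content of Theorem~\ref{thm:structure} — that the lattice has full rank inside $F_{\chi_1,\chi_2}$ — is the substantive part and will come from Corollary~\ref{thm:values} together with a rank computation, but this lemma only establishes the free abelian (lattice) structure, for which the above sketch suffices.
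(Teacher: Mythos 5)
Your proposal is correct and follows essentially the same route as the paper: Schreier's lemma gives finite generation of $\Gamma_1(q_1q_2)$, the homomorphism property gives a finitely generated torsion-free abelian image in $(\C,+)$, and the structure theorem yields freeness. The only difference is that you spell out why $\dsum$ is an honest homomorphism on $\Gamma_1(q_1q_2)$, which the paper takes as already established.
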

\begin{proof}
First note that since $\dsum \in \text{Hom}(\Gamma_1(q_1q_2),\mathbb{C})$, we must have that $\dsum(\Gamma_1(q_1q_2))$ is a torsion-free group. 
By Lemma \ref{schrier}, since $\Gamma_1(q_1q_2)$ is a subgroup of finite index in $SL_2(\Z)$, it has a finite generating set. Let $\{\gamma_i\}_{i=1}^r$ be a generating set of $\Gamma_1(q_1q_2)$. Then $\{\dsum(\gamma_i)\}_{i=1}^r$ is a generating set for $\dsum(\Gamma_1(q_1q_2)).$
By the structure theorem of abelian groups, $\dsum(\Gamma_1(q_1q_2))$ must be free.
\end{proof}
We next bound the rank of $\dsum(\Gamma_1(q_1q_2))$ from both above and below by $[F_{\chi_1,\chi_2}:\Q]$, beginning with the upper bound.
Recall that the Dedekind sum takes values in the number field $F_{\chi_1,\chi_2}$, which is the fraction field of the ring of algebraic integers $\mathcal{O}_{F_{\chi_1,\chi_2}}$. Using $\{\gamma_i\}_{i=1}^r$ as a generating set of $\Gamma_1(q_1q_2)$, let
$$d = \prod_{i} b_i, \quad \text{ where } \dsum(\gamma_i) = \frac{a_i}{b_i},\quad a_i,b_i\in \mathcal{O}_{F_{\chi_1,\chi_2}}.$$
We then have
$$\dsum(\Gamma_1(q_1q_2)) \subseteq \frac{1}{d}\mathcal{O}_{F_{\chi_1,\chi_2}}.$$
The rank of $\mathcal{O}_{F_{\chi_1,\chi_2}}$ over $\Z$ is precisely $[F_{\chi_1,\chi_2}:\Q]$, showing the upper bound.\\

Next, suppose for contradiction that $\dsum(\Gamma_1(q_1q_2)) = \bigoplus_{i=1}^r \alpha_i\Z$, where $r < [F_{\chi_1,\chi_2}:\Q]=n,\, \alpha_i \in F_{\chi_1,\chi_2}$. Consider the $n$ distinct Dedekind sums
$$\{S_{\chi_1^{\sigma}\chi_2^{\sigma}}|\sigma \in Gal(F_{\chi_1,\chi_2}/\Q)\}.$$
Clearly, $S_{\chi_1^{\sigma}\chi_2^{\sigma}}(\Gamma_1(q_1q_2)) = \bigoplus_{i=1}^r \alpha_i^{\sigma}\Z$. We then construct the matrix $(\alpha_i^{\sigma_j})_{ij}$ which must have a nontrivial kernel by its dimension. This contradicts the linear independence of Dedekind sums, completing the proof.

\begin{remark}
This argument carries over without changes to $\dsum(\Gamma_0(q_1q_2))$ in the case of $\chi_1\oo{\chi_2} = \boldsymbol{1}$.
\end{remark}

When $\chi_1\oo{\chi_2}$ is not trivial, we use that $\Gamma_0(q_1q_2) \rhd \Gamma_1(q_1q_2)$ and Proposition \ref{crossed} to state that
$$\dsum(\gamma) \in \dsum(\beta_i) + \bigoplus_{i=1}^d \alpha_i\Z,$$
where $\gamma \in \Gamma_0(q_1q_2)$ and $\beta_i$ is the coset representative of $\gamma$ in $\Gamma_0(q_1q_2)/\Gamma_1(q_1q_2).$

\begin{remark}
One method of determining a basis of this lattice can be derived from \cite{TW}, which outlines a process to compute the image of the generating set of $\Gamma_1(q_1q_2)$.
\end{remark}

\section{The Cohomological Aspect}
\subsection{Discussion of $H^1(G,M)$} \label{sec:discussion}
When we say the Dedekind sums satisfy the crossed homomorphism property, we need to refine what we mean.
\begin{definition}
Let $G$ be a group and $M$ an abelian group on which $G$ acts compatibly with the additive structure of $M$.
Then we denote by $Z^1(G,M)$ the space of crossed homomorphisms. That is, maps $\varphi: G \longrightarrow M$ satisfying:
$$\varphi(gh) = \varphi(g) + g\varphi(h).$$
\end{definition}
\noindent We then construct the first cohomology group, $H^1(G,M)$, as a quotient of $Z^1(G,M).$
\begin{definition}
We set $H^1(G,M) = Z^1(G,M)/B^1(G,M)$ where $B^1(G,M)$ is the subgroup generated by principal crossed homomorphisms. These are crossed homomorphisms $\varphi$ such that
\begin{equation*}
    \varphi(g) = gm-m,
\end{equation*}
where $m \in M.$
\end{definition}

\noindent In light of Proposition \ref{decomp}, we view the Dedekind sums as elements of $H^1(\Gamma_0(q_1q_2),\C^{\psi})$, and not elements of $Z^1(\Gamma_0(q_1q_2),\C^{\psi})$. In fact, quotienting out by principal crossed homomorphisms is necessary to show Lemma \ref{lem:split} and the well-definition of the projections in Definition \ref{def:projs}.

\subsection{Proof of Proposition \ref{decomp}}\label{sec:decompf}
We begin by defining the family of projections we use to prove the decomposition.
\begin{definition}
\label{def:projs}
We define the projection $\pi_{\psi}:Hom(\Gamma_1(N),\C) \longrightarrow H^1(\Gamma_0(N),\C^{\psi})$ by
\begin{equation*}
    \pi_{\psi} : \varphi \mapsto \frac{1}{\phi(n)} \sum_{j \in (\Z/N\Z)^{\cross}} \oo{\psi}(\beta_j)   \varphi(\beta_j \gamma \beta_{\sigma_{\gamma}(j)}^{-1}),
\end{equation*}
where $\beta_j$ are the right coset representatives of $\Gamma_0(N)/\Gamma_1(N)$, and $\beta_{\sigma_{\gamma}(j)}$ is the unique coset representative where $\beta_j \gamma \beta_{\sigma_{\gamma}(j)}^{-1} \in \Gamma_1(N)$ for $\gamma \in \Gamma_0(N).$
\end{definition}

\begin{remark}
Well definition of the projections, which are examples of a kind of map called co-restriction, is shown in a near identical way  to the proof of Lemma \ref{lem:split}. By direct computation we have $\sigma_{\gamma}$ is a permutation of $(\Z/N\Z)^{\cross}$, and that $\sigma_{\gamma_1 \gamma_2}(j) = \sigma_{\gamma_2}(\sigma_{\gamma_1}(j))$.
\end{remark}

We demonstrate that this family gives the desired decomposition in the next three lemmas.

\begin{lemma}
 We have $\pi_{\psi}(Hom(\Gamma_1,\C)) \subseteq H^1(\Gamma_0(N),\C^{\psi}).$
\end{lemma}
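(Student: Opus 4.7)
The plan is to verify directly that for any $\varphi \in \text{Hom}(\Gamma_1(N),\C)$, the map $\pi_{\psi}(\varphi)$ satisfies the crossed homomorphism identity for the $\psi$-action on $\C$, namely
\begin{equation*}
    \pi_{\psi}(\varphi)(\gamma_1\gamma_2) = \pi_{\psi}(\varphi)(\gamma_1) + \psi(\gamma_1)\pi_{\psi}(\varphi)(\gamma_2),
\end{equation*}
which places the class of $\pi_{\psi}(\varphi)$ in $H^1(\Gamma_0(N),\C^{\psi})$. First, I would note that the summand $\varphi(\beta_j \gamma \beta_{\sigma_{\gamma}(j)}^{-1})$ is well-defined since $\beta_j \gamma \beta_{\sigma_{\gamma}(j)}^{-1}$ lies in $\Gamma_1(N)$ by definition of $\sigma_\gamma$, so $\varphi$ is indeed applicable.

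Second, I would compute $\pi_{\psi}(\varphi)(\gamma_1\gamma_2)$ by inserting $\beta_{\sigma_{\gamma_1}(j)}^{-1}\beta_{\sigma_{\gamma_1}(j)}$ between $\gamma_1$ and $\gamma_2$. Using the cocycle identity $\sigma_{\gamma_1\gamma_2}(j) = \sigma_{\gamma_2}(\sigma_{\gamma_1}(j))$ noted in the remark, both factors
\begin{equation*}
    \beta_j \gamma_1 \beta_{\sigma_{\gamma_1}(j)}^{-1} \quad \text{and} \quad \beta_{\sigma_{\gamma_1}(j)}\gamma_2\beta_{\sigma_{\gamma_2}(\sigma_{\gamma_1}(j))}^{-1}
\end{equation*}
lie in $\Gamma_1(N)$. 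Since $\varphi$ is a genuine homomorphism on $\Gamma_1(N)$, the summand splits additively. The piece coming from the first factor reassembles to $\pi_{\psi}(\varphi)(\gamma_1)$ directly.

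The third step is to reindex the remaining piece via $k = \sigma_{\gamma_1}(j)$, which is a permutation of $(\Z/N\Z)^{\times}$. This produces
\begin{equation*}
    \frac{1}{\phi(N)}\sum_{k}\overline{\psi}(\beta_{\sigma_{\gamma_1}^{-1}(k)})\,\varphi(\beta_k \gamma_2 \beta_{\sigma_{\gamma_2}(k)}^{-1}).
\end{equation*}
Here I would invoke the key fact that $\psi$, being a Dirichlet character pulled back through the lower-right entry map, is a genuine character of $\Gamma_0(N)$ that is trivial on $\Gamma_1(N)$. Applying $\psi$ to the $\Gamma_1(N)$-element $\beta_{\sigma_{\gamma_1}^{-1}(k)}\gamma_1 \beta_k^{-1}$ gives $\psi(\beta_{\sigma_{\gamma_1}^{-1}(k)})\psi(\gamma_1) = \psi(\beta_k)$, equivalently $\overline{\psi}(\beta_{\sigma_{\gamma_1}^{-1}(k)}) = \psi(\gamma_1)\overline{\psi}(\beta_k)$. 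Substituting this identity pulls $\psi(\gamma_1)$ out of the sum and leaves exactly $\psi(\gamma_1)\pi_{\psi}(\varphi)(\gamma_2)$, completing the verification.

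The main obstacle is the bookkeeping in the third step: one must be careful that the indexing permutation $\sigma_{\gamma_1}$ interacts correctly with the character weights $\overline{\psi}(\beta_j)$, and the whole identity hinges on the triviality of $\psi$ on $\Gamma_1(N)$ together with multiplicativity of $\psi$ on $\Gamma_0(N)$. Once that identity between character values at coset representatives is in hand, the assembly into the crossed homomorphism identity is immediate, and so the cohomology class $[\pi_{\psi}(\varphi)]$ lies in $H^1(\Gamma_0(N),\C^{\psi})$ as claimed.
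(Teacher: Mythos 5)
Your proposal is correct and follows essentially the same route as the paper: insert $\beta_{\sigma_{\gamma_1}(j)}^{-1}\beta_{\sigma_{\gamma_1}(j)}$, split the summand using that $\varphi$ is a homomorphism on $\Gamma_1(N)$, and reindex by the permutation $\sigma_{\gamma_1}$ together with the identity $\oo{\psi}(\beta_j)=\psi(\gamma_1)\oo{\psi}(\beta_{\sigma_{\gamma_1}(j)})$ coming from triviality of $\psi$ on $\Gamma_1(N)$. Your write-up actually makes the reindexing and the character-value identity more explicit than the paper does.
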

\begin{proof}
Let $\varphi \in Hom(\Gamma_1,\C))$. Then for $\gamma_1,\gamma_2 \in \Gamma_0(N)$ we have
\begin{align*}
    \pi_{\psi}(\varphi)(\gamma_1\gamma_2) &=\projsum \oo{\psi}(\beta_j) \varphi(\beta_j \gamma_1 \gamma_2 \beta_{\sigma_{\gamma_1 \gamma_2}(j)}^{-1}) \\
    &= \projsum \oo{\psi}(\beta_j)[\varphi(\beta_j \gamma_1 \beta_{\sigma_{\gamma_1}(j)}^{-1}) + \psi(\beta_j \gamma_1 \beta_{\sigma_{\gamma_1}(j)}^{-1}) \varphi(\beta_{\sigma_{\gamma_1}(j)} \gamma_s \beta_{\sigma_{\gamma_1 \gamma_2}(j)}^{-1})]\\
    &= \projsum \oo{\psi}(\beta_j) \varphi(\beta_j \gamma_1 \beta_{\sigma_{\gamma_1}(j)}^{-1}) + \psi(\gamma_1) \projsum\oo{\psi}(\beta_{\sigma_{\gamma_1}(j)}) \varphi(\beta_{\sigma_{\gamma_1}(j)} \gamma_2 \beta_{\sigma_{\gamma_2}(\sigma_{\gamma_1}(j))}^{-1})\\
    &= \pi_{\psi}(\varphi)(\gamma_1) + \psi(\gamma_1)\pi_{\psi}(\varphi)(\gamma_2).
\end{align*}
\end{proof}

\begin{lemma}
\label{lem:split}
The projection $\pi_{\psi}$ is the identity on $H^1(\Gamma_0(N),\C^{\chi})$ if, and only if, $\psi=\chi$, and is the zero map otherwise.
\end{lemma}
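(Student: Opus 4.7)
The strategy is to expand $\pi_\psi(\varphi)(\gamma)$ using the cocycle relation for a representative $\varphi \in Z^1(\Gamma_0(N), \C^\chi)$ and then reduce to orthogonality of Dirichlet characters. First, I would normalize the coset representatives $\beta_j$ so that $\beta_j$ has bottom-right entry $\equiv j \pmod{N}$; then $\chi(\beta_j) = \chi(j)$. Since $\beta_j \gamma \beta_{\sigma_\gamma(j)}^{-1} \in \Gamma_1(N)$, comparing bottom-right entries forces $\sigma_\gamma(j) = j\, d_\gamma \pmod{N}$, where $d_\gamma$ is the bottom-right entry of $\gamma$; in particular $\chi(\beta_{\sigma_\gamma(j)}) = \chi(j)\chi(\gamma)$.

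Next, applying the cocycle condition twice, together with $\varphi(\beta^{-1}) = -\oo{\chi}(\beta)\varphi(\beta)$ (obtained from the cocycle relation applied to $\beta\beta^{-1} = I$), yields the clean identity
\begin{equation*}
\varphi(\beta_j \gamma \beta_{\sigma_\gamma(j)}^{-1}) = \varphi(\beta_j) + \chi(j)\varphi(\gamma) - \varphi(\beta_{\sigma_\gamma(j)}).
\end{equation*}
Substituting into the definition of $\pi_\psi(\varphi)(\gamma)$ splits the average into three pieces. The middle piece factors as $\varphi(\gamma)\cdot\projsum \oo{\psi}(j)\chi(j)$, which by orthogonality of Dirichlet characters equals $\delta_{\psi,\chi}\varphi(\gamma)$. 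In the third piece, the substitution $j \mapsto \sigma_\gamma(j) = jd_\gamma$ (a bijection of $(\Z/N\Z)^{\cross}$) extracts a factor of $\psi(d_\gamma) = \psi(\gamma)$, so the first and third pieces combine into $(1 - \psi(\gamma))c$, where $c := \projsum\oo{\psi}(j)\varphi(\beta_j)$ is a constant independent of $\gamma$.

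Assembling everything, $\pi_\psi(\varphi)(\gamma) = \delta_{\psi,\chi}\varphi(\gamma) + (1-\psi(\gamma))c$. The final step, and the main conceptual point of the proof, is to recognize $\gamma \mapsto (1-\psi(\gamma))c$ as the principal crossed homomorphism $\gamma \mapsto \gamma\cdot(-c) - (-c)$ in $B^1(\Gamma_0(N),\C^\psi)$, so it vanishes in $H^1$. Hence at the cohomology level $\pi_\psi$ acts as the identity on $H^1(\Gamma_0(N),\C^\chi)$ when $\psi = \chi$ and as zero otherwise. The only delicate point is this passage to the quotient: as the authors flag in the preceding discussion, $\pi_\psi$ is not a projection at the level of cocycles, and the vanishing of the $c$-term is exactly what forces us to work modulo $B^1$ for the statement to be clean.
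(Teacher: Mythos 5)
Your proposal is correct and follows essentially the same route as the paper: expand $\varphi(\beta_j\gamma\beta_{\sigma_\gamma(j)}^{-1})$ via the cocycle relation, isolate the $\varphi(\gamma)$ term and kill or keep it by orthogonality of characters, and recognize the remaining boundary terms as the principal crossed homomorphism $\gamma \mapsto (1-\psi(\gamma))c$ with $c = \projsum\oo{\psi}(\beta_j)\varphi(\beta_j)$, which vanishes in $H^1$. Your explicit identification $\sigma_\gamma(j) = jd_\gamma$ and the resulting reindexing make the paper's substitution step slightly more transparent, but the argument is the same.
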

\begin{proof}
Let $\varphi \in H^1(\Gamma_0(N),\C^{\chi})$ and $\gamma \in \Gamma_0(N)$. We have

\begin{align}
    \pi_{\psi}(\varphi)(\gamma) &= \projsum \oo{\psi}(\beta_j) \varphi(\beta_j \gamma \beta_{\sigma_{\gamma}(d)}^{-1}) \nonumber \\
    &= \projsum \oo{\psi}(\beta_j) [\varphi(\beta_j) + \chi(\beta_j)\varphi(\gamma) + \chi(\beta_j \gamma)\varphi(\beta_{\sigma_{\gamma}(d)}^{-1})] \nonumber \\
    &= \projsum \oo{\psi}(\beta_j)\chi(\beta_j)\varphi(\gamma) + \projsum \oo{\psi}(\beta_j) [\varphi(\beta_j) + \chi(\beta_j \gamma)\varphi(\beta_{\sigma_{\gamma}(d)}^{-1})] \label{eq:8}.
\end{align}
Using the orthogonality relations of Dirichlet characters, the first term in \eqref{eq:8}
equals $\varphi(\gamma)$ when $\psi = \chi$ and otherwise vanishes. By the substitution $\beta_j \gamma \equiv \beta_{\sigma_{\gamma}(j)} \pmod{\Gamma_1(N)}$, and the fact that $\chi(\beta)\varphi(\beta^{-1}) = -\varphi(\beta)$, the second term is equal to the principal crossed homomorphism
$$
-\psi(\gamma) \bigg(\projsum \oo{\psi}(\beta_j)\varphi(\beta_j) \bigg) + \projsum\oo{\psi}(\beta_j)\varphi(\beta_j)
$$
which vanishes in $H^1(\Gamma_0(N),\C^{\psi})$.
\end{proof}

\begin{lemma}
The map $\sum_{\psi} \pi_{\psi}$ is the identity on $Hom(\Gamma_1(N),\C).$
\end{lemma}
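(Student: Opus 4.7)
The plan is to verify the identity pointwise by fixing $\varphi \in \text{Hom}(\Gamma_1(N),\C)$ and $\gamma \in \Gamma_1(N)$, and computing $\sum_\psi \pi_\psi(\varphi)(\gamma)$ directly, where the sum runs over all Dirichlet characters $\psi$ modulo $N$. Since each $\pi_\psi(\varphi)$ lies in $H^1(\Gamma_0(N),\C^\psi)$ and the $\Gamma_0(N)$-action on $\C^\psi$ is trivial when restricted to $\Gamma_1(N)$, each $\pi_\psi(\varphi)$ restricts to an ordinary homomorphism on $\Gamma_1(N)$, so the target space is the correct one.

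The key observation is that $\Gamma_1(N)$ is normal in $\Gamma_0(N)$ with abelian quotient $(\Z/N\Z)^\times$. For $\gamma \in \Gamma_1(N)$, normality forces $\beta_j \gamma \beta_j^{-1} \in \Gamma_1(N)$ for every $j$, and by uniqueness of coset representatives this means $\sigma_\gamma(j) = j$ for all $j$. Each projection therefore collapses to
$$\pi_\psi(\varphi)(\gamma) = \projsum \oo{\psi}(\beta_j)\,\varphi(\beta_j \gamma \beta_j^{-1}).$$

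Summing over $\psi$ and exchanging the order of summation, orthogonality of Dirichlet characters modulo $N$ gives $\sum_\psi \oo{\psi}(\beta_j) = \phi(N)$ when $\beta_j$ represents the trivial coset in $\Gamma_0(N)/\Gamma_1(N)$, and vanishes otherwise. Taking $\beta_1 = I$ as the representative of the identity coset, only this single term survives and contributes $\varphi(I \cdot \gamma \cdot I^{-1}) = \varphi(\gamma)$, as desired.

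There is no substantial obstacle here: once one recognizes that $\sigma_\gamma$ acts trivially when $\gamma \in \Gamma_1(N)$, the remainder is a one-line application of character orthogonality. The only small bookkeeping tasks are fixing the identity coset representative to be $I$ and noting that the restriction to $\Gamma_1(N)$ is the correct way to interpret both sides of the identity. Combined with the previous two lemmas, this establishes that $\{\pi_\psi\}$ provides a family of projectors summing to the identity with orthogonal images, yielding the direct sum decomposition of Proposition \ref{decomp}.
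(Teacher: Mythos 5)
Your proof is correct and follows essentially the same route as the paper: exchange the order of summation over $\psi$ and $j$, apply orthogonality of Dirichlet characters so that only the identity coset representative survives, and read off $\varphi(\gamma)$. Your explicit observations that $\sigma_\gamma(j)=j$ for $\gamma\in\Gamma_1(N)$ by normality and that $\beta_1=I$ must be chosen as the identity coset representative are details the paper leaves implicit, so your write-up is if anything slightly more careful.
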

\begin{proof}
Let $\varphi \in Hom(\Gamma_1(N),\C).$ Then we have 
\begin{align*}
    \sum_{\psi} \pi_{\psi}(\varphi)(\gamma) &= \sum_{\psi} \projsum \oo{\psi}(\beta_j) \varphi(\beta_j \gamma \beta_{\sigma_{\gamma}(j)}^{-1}) \\
    &= \projsum \varphi(\beta_j \gamma \beta_{\sigma_{\gamma}(j)}^{-1}) \sum_{\psi} \oo{\psi}(\beta_j)\\
    &= \varphi(\gamma).
\end{align*}
\end{proof}

\subsection{Double Coset Operators}\label{sec:coset}
As in the case of modular forms (see \cite{DS}), a double coset operator can be defined from $H^1(\Gamma_1, M)$ to $H^1(\Gamma_2,M)$ for congruence subgroups $\Gamma_1$ and $\Gamma_2$.
\begin{definition}
For $\alpha \in GL_2^+(\Q)$, and congruence subgroups $\Gamma_1$ and $\Gamma_2$, define the double coset operator $\Gamma_1 \alpha \Gamma_2$ on $\varphi \in H^1(\Gamma_1,*)$ by
\begin{equation*}
    \varphi|_{\Gamma_1 \alpha \Gamma_2}(\gamma) = \sum_{\beta_i} \widetilde{\beta_i}\varphi(\beta_i \gamma \beta_{\sigma_{\gamma}(j)}^{-1}).
\end{equation*}
Here $\gamma \in \Gamma_2$, $\beta_i$ are the orbit representatives of $\Gamma_1 \backslash \Gamma_1 \alpha \Gamma_2$,  $\widetilde{\beta_i} = det(\beta_i)\beta_i^{-1}$, and $\beta_{\sigma_{\gamma}(j)}$ is the unique orbit representative such that $\beta_i \gamma \beta_{\sigma_{\gamma}(j)}^{-1} \in \Gamma_1$.
\end{definition}
Observe that this requires a definition for the action of the orbit representatives on the common group module $M$. We conclude with a brief motivitation for the defintion of the Hecke operators acting on $H^1(\Gamma_0(N),\C^{\psi})$ with this new context.

The most important specification to make for double coset operators is defining the action of the orbit representatives. Action by Dirichlet character $\psi$ on $\C$ can be naturally extended to the monoid $ \mathcal{M} \coloneqq \bigg\{\gamma = \mat \in M_2(\Z) : N\,|\,c \bigg\}$ by $\gamma z = \psi(\gamma)z = \psi(d)z$. From \cite{Iwan}, we have that $\Delta_n^N$ is a complete set of orbit representatives for $T_n^{\psi}$, all of which lie in $\mathcal{M}$. Their action gives the multiplication by the character $\psi$ given in Definition \ref{def:hecke}.

\section{Acknowledgements}
This work was conducted during the summer of 2022 at an REU at Texas A\&M University. The author thanks the Department of Mathematics at Texas A\&M and the NSF (DMS-2150094) for supporting the REU.
The author would like to thank Matthew Young (Texas A\&M University) for guiding this research project, and Agniva Dasgupta (Texas A\&M University) for his lectures and feedback during the REU.

\bibliography{main}
\bibliographystyle{alpha}

\Addresses

\end{document}